\DeclareMathOperator{\SL}{SL}
\DeclareMathOperator{\Z}{\mathbb{Z}}
\DeclareMathOperator{\R}{\mathbb{R}}
\DeclareMathOperator{\C}{\mathbb{C}}
\renewcommand{\H}{\mathbb{H}}
\renewcommand{\Re}{\text{Re}}
\DeclareMathOperator{\cusp}{cusp}
\renewcommand{\S}{\mathcal{S}}
	\newtheorem{Satz}{Satz}[section]
	\newtheorem{Theorem}[Satz]{Theorem}
	\newtheorem{Lemma}[Satz]{Lemma}
	\newtheorem{Proposition}[Satz]{Proposition}
	\theoremstyle{definition}
\date{}
\author{Markus Schwagenscheidt}
\title{Nonvanishing and Central Critical Values of Twisted $L$-functions of Cusp Forms on Average}
\begin{document} 
		\maketitle

	
	\begin{abstract}
		Let $f$ be a holomorphic cusp form of integral weight $k \geq 3$ for $\Gamma_{0}(N)$ with nebentypus character $\psi$. Generalising work of Kohnen \cite{KohnenNonvanishing} and Raghuram \cite{Raghuram} we construct a kernel function for the $L$-function $L(f,\chi,s)$ of $f$ twisted by a primitive Dirichlet character $\chi$ and use it to show that the average $\sum_{f \in S_{k}(N,\psi)}\frac{L(f,\chi,s)}{\langle f,f\rangle}\overline{a_{f}(1)}$ over an orthogonal basis of $S_{k}(N,\psi)$ does not vanish on certain line segments inside the critical strip if the weight $k$ or the level $N$ is big enough. As another application of the kernel function we prove an averaged version of Waldspurger's theorem relating the central critical value of the $D$-th twist ($D < 0$ a fundamental discriminant) of the $L$-function of a cusp form $f$ of even weight $2k$ to the square of the $|D|$-th Fourier coefficient of a form of half-integral weight $k+1/2$ associated to $f$ under the Shimura correspondence.
	\end{abstract}
	
	\section{Introduction and Results}
	
	Let $f$ be an elliptic cusp form for $\SL_{2}(\Z)$ and let $L^{*}(f,s)$ denote the completed $L$-function of $f$. It has an analytic continuation to $\C$ and satisfies the functional equation $L^{*}(f,s) = L^{*}(f,k-s)$. The generalised Riemann hypothesis for $L^{*}(f,s)$ states that its only zeros inside the critical strip $(k-1)/2 < \Re(s) < (k+1)/2$ lie on the line $\Re(s) = k/2$. In \cite{KohnenNonvanishing} Kohnen constructed a kernel function $R_{k}(\tau,s)$ for the completed $L$-function $L^{*}(f,s)$ and used it to show that in a certain sense the generalised Riemann hypothesis for $L^{*}(f,s)$ holds on average for large weights. More precisely, he proved:
	
	\begin{Theorem}[\cite{KohnenNonvanishing}]\label{ThmKohnenNonvanishing}
		For every $t_{0} \in \R$ and $\varepsilon > 0$ there exists a constant $C(t_{0},\varepsilon) > 0$ such that for every even integer $k > C(t_{0},\varepsilon)$ the function
		\[
		\sum_{f \in S_{k}(1)}\frac{L^{*}(f,s)}{\langle f,f \rangle}
		\]
		(where the sum runs over an orthogonal basis of normalised Hecke eigenforms for $\SL_{2}(\Z)$) does not vanish at points $s = \sigma+ it_{0}$ with $(k-1)/2 < \sigma < k/2-\varepsilon, k/2 +\varepsilon < \sigma < (k+ 1)/2$.
	\end{Theorem}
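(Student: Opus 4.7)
The plan is to construct a kernel function $R_k(\tau, s) \in S_k(1)$ whose Petersson inner product with any cusp form $f$ reproduces the completed $L$-function $L^*(f, s)$ up to an explicit scalar, and then to realise the desired average as the first Fourier coefficient of $R_k$. Non-vanishing of the average is thereby reduced to non-vanishing of an explicitly computable Fourier coefficient.

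Concretely, I would build $R_k$ as a Poincar\'e-type series
\[
R_k(\tau, s) = \sum_{\gamma \in \Gamma_\infty \backslash \SL_2(\Z)} \phi_s(\gamma \tau)\, j(\gamma, \tau)^{-k},
\]
with a test function $\phi_s$ (a suitable combination of powers of $\tau$) chosen so that the standard unfolding argument against $f \in S_k(1)$ produces
\[
\langle f, R_k(\cdot, \bar s)\rangle = c_k(s)\, L^*(f,s)
\]
for an explicit nonzero factor $c_k(s)$, essentially a Gamma function of $s$. Since the normalised Hecke eigenforms have real Fourier coefficients and satisfy $a_f(1) = 1$, expanding $R_k$ in this orthogonal basis gives
\[
R_k(\tau, s) = c_k(s) \sum_{f \in S_k(1)} \frac{L^*(f,s)}{\langle f, f\rangle}\, f(\tau),
\]
so that the first Fourier coefficient of $R_k(\cdot, s)$ is exactly $c_k(s)$ times the sum appearing in the theorem.

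The next step is to compute the first Fourier coefficient of $R_k(\tau, s)$ directly from its defining series, by splitting the sum over cosets of $\Gamma_\infty \backslash \SL_2(\Z)$ according to the lower-left entry $c$. The contribution from $c = 0$ yields a clean main term expressible as a product of Gamma factors and two shifted Riemann zeta values, while the cosets with $c \geq 1$ produce an error term of Kloosterman--Bessel shape, essentially a sum of Kloosterman sums weighted by $J_{k-1}$-Bessel functions. The classical decay of $J_{k-1}$ with fixed argument as $k \to \infty$ (of the type $(\pi e/k)^{k-1}$) shows that after dividing by $c_k(s)$ the error contribution tends to zero uniformly for $s = \sigma + it_0$ on any fixed closed segment inside the critical strip.

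The main obstacle will be proving that the main term itself is bounded below in absolute value by a positive constant on the open segments $(k-1)/2 < \sigma < k/2 - \varepsilon$ and $k/2 + \varepsilon < \sigma < (k+1)/2$, uniformly as $k$ grows. The Gamma factors are zero-free in the relevant region, so the issue reduces to controlling the two shifted zeta values, whose possible trivial zeros or pole contributions are concentrated symmetrically around $\sigma = k/2$; the $\varepsilon$-buffer around the centre is precisely what rules these out and allows a uniform positive lower bound on the closed complementary subsegments. Combining that lower bound with the Bessel-error decay above yields, for every $t_0$ and $\varepsilon$, a threshold $C(t_0, \varepsilon)$ beyond which the average cannot vanish on the prescribed line segments.
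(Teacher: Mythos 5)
Your skeleton is the same as Kohnen's (and as this paper's twisted generalisation): build a kernel $R_{k}(\tau,s)$ with $\langle f,R_{k}(\cdot,\bar s)\rangle=c_{k}(s)L^{*}(f,s)$, observe that the average in question is (up to $c_{k}(s)\neq 0$) the first Fourier coefficient of $R_{k}(\cdot,s)$, compute that coefficient by sorting the group elements, and show it cannot vanish for large $k$. But the two concrete claims on which your computation rests are not correct, and they hide the actual mechanism of the proof. First, the diagonal contribution: the cosets with $c=0$ give (after normalising) just the constant $1$; no Riemann zeta values occur anywhere in the Fourier expansion of this cuspidal kernel, so there is no ``product of Gamma factors and two shifted zeta values'' whose zero-freeness needs to be discussed. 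What you are missing is the second diagonal-type term coming from the matrices with vanishing upper left entry $a=0$ (present exactly because the level is $1$), which contributes a dual term of size $(2\pi)^{k-2s}\,\Gamma(s)/\Gamma(k-s)$ times $m^{k-s-1}$. The sole purpose of the hypothesis $|\sigma-k/2|>\varepsilon$ is that, by Stirling, this Gamma quotient is $\asymp (k/2)^{-2\delta}$ with $\delta>\varepsilon$, hence tends to $0$ uniformly as $k\to\infty$; it has nothing to do with avoiding zeros or poles of zeta. So your ``main obstacle'' paragraph analyses a nonexistent object and omits the term that the argument actually has to beat.

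Second, the off-diagonal contribution: at a general point $s$ inside the critical strip the terms with $ac\neq 0$ are not Kloosterman sums weighted by $J_{k-1}$. To get that shape you would have to open $L(f,s)=\sum_{n}a_{f}(n)n^{-s}$ and apply the Petersson formula termwise, but this Dirichlet series does not converge for $(k-1)/2<\Re(s)<(k+1)/2$, so that interchange is illegitimate (and you invoke no approximate functional equation to repair it). The direct computation of the Fourier integral instead yields sums of confluent hypergeometric values ${}_{1}F_{1}(s,k;\cdot)$ over coprime pairs $(a,c)$ with $ac\neq0$; Bessel functions of half-integral order only appear at the special value $s=k$ used in the Waldspurger part of this paper. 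Accordingly, the decay as $k\to\infty$ is not obtained from Bessel asymptotics of the type $(\pi e/k)^{k-1}$, but from the bound $|{}_{1}F_{1}(s,k;2\pi i x)|\le \Gamma(k)/\big(|\Gamma(s)||\Gamma(k-s)|\big)$ furnished by the integral representation, combined with Stirling, which makes the whole off-diagonal term $O\big((2\pi m h)^{k/2}/|\Gamma(k/2+\delta+it_{0})|\big)\to 0$. Until you replace your two claimed computations by the correct ones --- the $a=0$ dual term killed by the Gamma-ratio estimate, and the ${}_{1}F_{1}$ treatment (or a justified analytic continuation) of the $ac\neq0$ terms --- the proposal does not constitute a proof.
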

	
	The theorem has been generalised by Raghuram \cite{Raghuram} to arbitrary level $N$ and primitive Nebentypus character $\psi$, using essentially the same arguments.
	
	Following the ideas of Kohnen \cite{KohnenNonvanishing}, we consider the kernel function 
	\[
	R_{k,N,\psi}(\tau,s,\chi) = \sum_{m=1}^{\infty}\overline{\chi(m)}m^{s-1}P_{k,N,\psi,m}(\tau)
	\]
	(with $P_{k,N,\psi,m} \in S_{k}(N,\psi)$ being the usual $m$-th Poincar\'e series) for the twisted $L$-function 
	\[
	L(f,\chi,s) = \sum_{m=1}^{\infty}\chi(m)a_{f}(m)m^{-s}
	\]
	of cusp forms $f \in S_{k}(N,\psi)$ of weight $k \geq 3$, level $N$ and Nebentypus character $\psi$, where $\chi$ is a primitive Dirichlet character whose conductor is coprime to the level $N$ and $a_{f}(m)$ denotes the $m$-th Fourier coefficient of $f$. We compute the Fourier expansion of $R_{k,N,\psi}(\tau,s,\chi)$ and use it to show an anlogue of Theorem \ref{ThmKohnenNonvanishing} for $L(f,\chi,s)$. The arguments closely follow those in \cite{KohnenNonvanishing} and \cite{Raghuram}. We prove:
	
	\begin{Theorem}\label{Nonvanishing}
	Let $\chi$ be a primitive Dirichlet character mod $h$.
	\begin{enumerate}
		\item For every positive integer $N$ with $(N,h) =1$, every positive integer $m$ with $(m,h) = 1$, every $t_{0} \in \R$ and $\varepsilon > 0$ there exists a constant $C(t_{0},\varepsilon,N,m) > 0$ such that for every integer $k > C(t_{0},\varepsilon,N,m)$ and every Dirichlet character $\psi$ mod $N$ with $\psi(-1) = (-1)^{k}$ the function
		\[
		\sum_{f \in S_{k}(N,\psi)}\frac{L(f,\chi,s)}{\langle f,f \rangle}\overline{a_{f}(m)}
		\]
		does not vanish at points $s = \sigma+ it_{0}$ with $(k-1)/2 < \sigma < k/2-\varepsilon, k/2 +\varepsilon < \sigma < (k+ 1)/2$.
		\item For every  integer $k \geq 3$, every positive integer $m$ with $(m,h) = 1$, every $t_{0} \in \R$ and $\varepsilon > 0$ there exists a constant $C(t_{0},\varepsilon,k,m) > 0$ such that for every integer $N > C(t_{0},\varepsilon,k,m)$ with $(N,h) = 1$ and every Dirichlet character $\psi$ mod $N$ with $\psi(-1) = (-1)^{k}$ the function
		\[
		\sum_{f \in S_{k}(N,\psi)}\frac{L(f,\chi,s)}{\langle f,f \rangle}\overline{a_{f}(m)}
		\]
		does not vanish at points $s = \sigma+ it_{0}$ with $(k-1)/2 < \sigma < k/2-\varepsilon, k/2 +\varepsilon < \sigma < (k+1)/2$.
		\end{enumerate}
		Here the sums run over an orthogonal basis of $S_{k}(N,\psi)$.
	\end{Theorem}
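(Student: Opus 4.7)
The plan is to adapt the kernel-function argument of \cite{KohnenNonvanishing}, generalised in \cite{Raghuram}, to the twisted setting. The starting point is the reproducing formula
\[
\langle f,P_{k,N,\psi,m}\rangle \;=\; c_{k}\, m^{1-k}\,\overline{a_{f}(m)},\qquad c_{k} \;=\; \frac{\Gamma(k-1)}{(4\pi)^{k-1}},
\]
valid for all $f \in S_{k}(N,\psi)$. Expanding $R_{k,N,\psi}(\tau,s,\chi)$ with respect to an orthogonal basis of $S_{k}(N,\psi)$, inserting this formula inside the $m$-sum in its region of absolute convergence, and interchanging summation yields
\[
R_{k,N,\psi}(\tau,s,\chi) \;=\; c_{k}\sum_{f}\frac{L(f,\chi,s)}{\langle f,f\rangle}\,f(\tau).
\]
Reading off the $m$-th Fourier coefficient of both sides translates the non-vanishing claim of the theorem into the non-vanishing of the $m$-th Fourier coefficient of $R_{k,N,\psi}(\tau,s,\chi)$.

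Next I would compute this Fourier coefficient directly from the definition of the kernel. Each $P_{k,N,\psi,m}$ has a classical Fourier expansion whose $n$-th coefficient consists of $\delta_{m,n}$ plus a convergent sum over moduli $c \equiv 0 \pmod{N}$ of twisted Kloosterman sums multiplied by Bessel factors $J_{k-1}(4\pi\sqrt{mn}/c)$. Weighting by $\overline{\chi(m)}m^{s-1}$ and summing over $m$ gives, at the $n$-th Fourier coefficient of $R_{k,N,\psi}(\tau,s,\chi)$, a \emph{main term} $\overline{\chi(n)}n^{s-1}$ (from the diagonal $m=n$) plus an \emph{error term} $E_{k,N}(s,n,\chi)$ explicitly expressible through Kloosterman--Bessel sums and Dirichlet series in $s$. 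Setting $n = m$ and using $(m,h)=1$, the main term $\overline{\chi(m)}m^{s-1}$ has absolute value $m^{\sigma-1}$ on the segment $s = \sigma + it_{0}$, which is uniformly bounded below.

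It then remains to prove $|E_{k,N}(s,m,\chi)| < m^{\sigma-1}$ uniformly in $s = \sigma+it_{0}$ for $(k-1)/2 < \sigma < (k+1)/2$, $|\sigma - k/2| > \varepsilon$. For part (1) one fixes $N$ and lets $k \to \infty$: the standard bound $J_{k-1}(x) \ll \bigl(ex/(2(k-1))\bigr)^{k-1}$ for bounded $x$, together with Weil-type estimates on the inner Kloosterman sums, makes $E_{k,N}$ decay faster than any polynomial in $k$. For part (2) one keeps $k \geq 3$ fixed and lets $N \to \infty$: the smallest admissible Kloosterman modulus is $c=N$, and combining the trivial Bessel estimate $J_{k-1}(x) \ll x^{k-1}$ with Weil's bound extracts a positive power of $N$ from each summand, again forcing $E_{k,N} \to 0$.

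The principal difficulty lies in making these estimates uniform across the full prescribed segment, especially near the endpoints $\sigma = (k\pm 1)/2$, where the $s$-dependent Dirichlet series obtained from interchanging summations is only conditionally convergent. Following \cite{KohnenNonvanishing,Raghuram} this is handled by splitting the implicit $m$-sum in $E_{k,N}$ into a short range, where the Bessel function provides rapid decay near the origin, and a long range, where Weil's bound for the inner Kloosterman sums produces enough cancellation for an absolutely convergent estimate valid on the whole segment. The hypotheses $(N,h)=1$ and $(m,h)=1$ ensure respectively that the twisted Kloosterman sums retain their standard form and that the main term does not vanish.
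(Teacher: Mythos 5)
Your overall skeleton (kernel function, reading off the $m$-th Fourier coefficient, isolating a main term $\overline{\chi(m)}m^{s-1}$, and forcing the rest to zero as $k\to\infty$ or $N\to\infty$) is the right one and agrees with the paper. But the way you propose to control the remainder has a genuine gap. You expand each $P_{k,N,\psi,m}$ by the Petersson formula (Kloosterman sums times $J_{k-1}(4\pi\sqrt{mn}/c)$), sum over $m$ with weight $\overline{\chi(m)}m^{s-1}$, and then claim that Weil's bound plus standard Bessel estimates give an \emph{absolutely convergent} majorant for the error term on the whole segment. They do not. Write $\sigma=k/2-\delta$ with $\varepsilon<\delta\le 1/2$. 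In the range where the Bessel argument is large (past the turning point $m\asymp (kc)^{2}$), the trivial bound gives terms of size $m^{\sigma-1-(k-1)/2}\,c^{-1}\,|K_{\psi}(m,n;c)|\,(\sqrt{mn}/c)^{-1/2}\asymp m^{-3/4-\delta}c^{\varepsilon'}$: the $m$-sum already diverges for $\delta\le 1/4$ (and $\varepsilon$ may be arbitrarily small, so this range is part of the theorem), and even where it converges the subsequent $c$-sum behaves like $\sum_{c}c^{1/2-2\delta+\varepsilon'}$, which diverges for every $\delta\le 1/2$. Weil's bound is a bound on individual Kloosterman sums and produces no cancellation in the $m$- or $c$-aggregation, so the "long range, absolutely convergent" step is not available; one would need genuine oscillation (Voronoi/stationary-phase type analysis), which your sketch does not supply. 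Contrary to your attribution, Kohnen and Raghuram do not proceed this way either.

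There is also a structural symptom of the same problem: your error term $E_{k,N}$ silently contains the dual main term that, for $N=1$, is of size $\frac{|\Gamma(k/2-\delta-it_{0})|}{|\Gamma(k/2+\delta+it_{0})|}m^{k-s-1}\approx (k/2)^{-2\delta}m^{\delta}$ relative to $m^{-\delta}$; this term is exactly why the strip $|\sigma-k/2|\le\varepsilon$ must be excluded and why the constant depends on $\varepsilon$, and it does \emph{not} decay faster than any polynomial in $k$ uniformly in $\sigma$. The paper avoids all of this by never invoking Kloosterman sums: it computes the Fourier expansion of $R_{k,N,\psi}(\tau,s,\chi)$ directly, using the character decomposition $\overline{\chi(m)}=G(\chi)^{-1}\sum_{\ell(h)}\chi(\ell)e(\ell m/h)$ and the Lipschitz summation formula, splitting the $\Gamma_{0}(N)$-sum according to $c=0$, $a+\tfrac{\ell}{h}c=0$ (which, by $(N,h)=1$, occurs only for $N=1$ and produces precisely the Gamma-quotient dual term), and $(a+\tfrac{\ell}{h}c)c\neq 0$. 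The last piece is an absolutely convergent sum of values ${}_{1}F_{1}(s,k;\cdot)$, uniformly bounded by $\Gamma(k)/(\Gamma(s)\Gamma(k-s))$ via the integral representation, after which both limits ($k\to\infty$ with $N$ fixed, and $N\to\infty$ with $k$ fixed, where the explicit factor $N^{-k/2-\delta}$ appears) follow from Stirling-type asymptotics; the right half of the strip is handled by the functional equation for $N=1$ and directly for $N>1$. To repair your argument you would either have to carry out the oscillatory analysis of the Kloosterman--Bessel double sum with uniformity in $\sigma$, $k$, $N$, and explicitly extract the dual main term, or switch to the hypergeometric computation as in the paper.
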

	
	Note that $L(f,\chi,s)$ and its completion $L^{*}(f,\chi,s)$ differ by a factor which is indepent of $f$ and does not vanish in the critical strip, so we could as well replace $L(f,\chi,s)$ by $L^{*}(f,\chi,s)$ in the theorem.

	As a second application of the kernel function $R_{k,N,\psi}(\tau,s,\chi)$ we prove an averaged version of Waldspurger's theorem relating the central critical value of the twisted $L$-function of a cusp form of even weight to the square of the Fourier coefficient of a modular form of half-integral weight. Let us explain our result in some more detail:
	
	Let $g \in S^{+}_{k+1/2}$ be a cuspidal Hecke eigenform of half-integral weight $k+1/2$ in the plus space of level $4$ and let $f \in S_{2k}$ be the normalized eigenform of even weight $2k$ for $\SL_{2}(\Z)$ associated to $g$ under the Shimura correspondence. Waldspurger's theorem \cite{Waldspurger} as explicated by Kohnen and Zagier \cite{KohnenZagier} states that for a fundamental discriminant $D$ with $(-1)^{k}D > 0$ it holds
	\begin{align}\label{Waldspurger}
	\frac{|c_{g}(|D|)|^{2}}{\langle g,g\rangle} = \frac{(k-1)!}{\pi^{k}}|D|^{k-1/2}\frac{L(f,D,k)}{\langle f,f\rangle}
	\end{align}
	where $L(f,D,s) =\sum_{m = 1}^{\infty}\big(\frac{D}{m} \big)a_{f}(m)m^{-s}$ denotes the $L$-function of $f$ twisted by the Kronecker symbol $\big(\frac{D}{\cdot}\big)$ and $c_{g}(|D|)$ is the $|D|$-th Fourier coefficient of $g$. The theorem has been generalised to odd squarefree level by Kohnen \cite{KohnenFourierCoefficients} and to arbitrary level using Jacobi forms instead of half-integral weight forms by Gross, Kohnen and Zagier \cite{GKZ}.
	 Shimura \cite{Shimura93} gave similar but different formulas for Hilbert modular forms. Recently, Kojima \cite{Kojima} generalised the lifting maps and the Waldspurger formula of $\cite{GKZ}$ to Hilbert modular forms over totally real fields of class number $1$.
	
	We prove an averaged version of Waldspurger's formula for arbitrary level $N$ using the language of Jacobi forms as in \cite{GKZ}, where by 'averaged' we mean the formula obtained by summing both sides of the Jacobi form analogue of (\ref{Waldspurger}) over orthogonal bases of the corresponding spaces of cusp forms.
		\begin{Theorem}\label{AveragedWaldspurger}
		Let $k \geq 2$ and $N$ be positive integers. Let $D < 0$ be a fundamental discriminant with $(D,N) = 1$ and $D \equiv r^{2} (4N)$ for some $r \in \Z$, and let $m > 0$ be a positive integer. Then it holds
		\begin{align}
	\sum_{\phi \in J^{\cusp}_{k+1,N}}\frac{c_{\phi}(D,r)\overline{c_{\phi|T_{m}}(D,r)}}{\langle \phi,\phi \rangle} = \frac{(k-1)!}{2^{2k-1}\pi^{k}N^{k-1}}|D|^{k-1/2}\sum_{f \in S_{2k}(N)}\frac{L(f,D,k)}{\langle f,f\rangle}\overline{a_{f}(m)}
		\end{align}
		where the sums run over orthogonal bases of the spaces of Jacobi cusp forms of weight $k+1$ and index $N$ and elliptic cusp forms of weight $2k$ and level $N$, and $T_{m}$ denotes the $m$-th Hecke operator on $J_{k+1,N}^{\cusp}$.
	\end{Theorem}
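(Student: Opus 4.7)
The plan is to reduce both sides of the identity to Petersson inner products of Poincar\'e series (Jacobi on the left, elliptic on the right) and then to identify them via the theta decomposition linking index-$N$ Jacobi cusp forms with half-integral weight forms on $\Gamma_{0}(4N)$. Throughout, $\chi_{D}=\bigl(\tfrac{D}{\cdot}\bigr)$ denotes the Kronecker character attached to $D$; it is primitive of conductor $|D|$ and coprime to $N$ by assumption.

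For the left-hand side, the Jacobi Poincar\'e series $P^{\cusp}_{k+1,N;(D,r)}\in J^{\cusp}_{k+1,N}$ reproduces the $(D,r)$-th Fourier coefficient,
\[
c_{\phi}(D,r) \;=\; \beta_{k,N,|D|}\,\langle \phi,\,P^{\cusp}_{k+1,N;(D,r)}\rangle \qquad \text{for all } \phi\in J^{\cusp}_{k+1,N},
\]
with an explicit positive constant $\beta_{k,N,|D|}$. Inserting this for both $c_{\phi}(D,r)$ and $\overline{c_{\phi|T_{m}}(D,r)}$, using the self-adjointness of $T_{m}$ on $J^{\cusp}_{k+1,N}$, and applying the Parseval-type identity
\[
\sum_{\phi}\frac{\langle A,\phi\rangle\overline{\langle B,\phi\rangle}}{\langle\phi,\phi\rangle} \;=\; \langle A,B\rangle,
\]
valid for any orthogonal basis, the left-hand side collapses to $|\beta_{k,N,|D|}|^{2}\langle P^{\cusp}_{k+1,N;(D,r)}|T_{m},\,P^{\cusp}_{k+1,N;(D,r)}\rangle$, i.e.\ an explicit multiple of the $(D,r)$-th Fourier coefficient of $P^{\cusp}_{k+1,N;(D,r)}|T_{m}$. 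Unfolding this Poincar\'e series in the standard way produces a Dirichlet series of generalised Jacobi-Kloosterman sums weighted by $J_{2k-1}$-Bessel functions.

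On the elliptic side, the same strategy applies with the kernel function $R_{2k,N}(\cdot,s,\chi_{D})$ from the preceding sections (specialised to trivial nebentypus) and the ordinary Poincar\'e series $P_{2k,N,m}$. The Fourier expansion of $P_{2k,N,m}$ gives the two reproducing formulas
\[
\langle f,\,R_{2k,N}(\cdot,k,\chi_{D})\rangle = \tfrac{(2k-2)!}{(4\pi)^{2k-1}}\,\overline{L(f,\chi_{D},k)},\qquad \langle f,\,P_{2k,N,m}\rangle = \tfrac{(2k-2)!}{(4\pi m)^{2k-1}}\,\overline{a_{f}(m)},
\]
and another application of the Parseval identity reduces the right-hand side of the theorem to $\tfrac{(4\pi)^{2k-1}}{(2k-2)!}$ times the $m$-th Fourier coefficient of $R_{2k,N}(\cdot,k,\chi_{D})$. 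The explicit Fourier expansion of the kernel, computed earlier in the paper, writes this coefficient as a Dirichlet series of elliptic Kloosterman sums of level $N$ twisted by $\chi_{D}$, again paired with $J_{2k-1}$-Bessel functions.

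The heart of the proof is the comparison of these two Kloosterman-Bessel expressions. Via the Eichler-Zagier theta decomposition $\phi(\tau,z)=\sum_{\rho\,(2N)} h_{\rho}(\tau)\theta_{N,\rho}(\tau,z)$, the Poincar\'e series $P^{\cusp}_{k+1,N;(D,r)}$ corresponds to a vector-valued Poincar\'e series of half-integral weight $k+1/2$ on $\Gamma_{0}(4N)$ (in the Kohnen plus space), whose Fourier coefficients are given by half-integral weight Kloosterman sums depending on $(D,r)$. The classical Shimura-Niwa-Kohnen-Zagier identity, which rewrites such half-integral weight Kloosterman sums as integral-weight Kloosterman sums of level $N$ twisted by $\chi_{D}$, then transforms the Jacobi expression into the elliptic one and accounts for the normalisation constant $2^{2k-1}N^{k-1}$ of the theorem. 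This Kloosterman-sum identity, together with the careful bookkeeping of $\Gamma$-factors, powers of $\pi$, and half-integral-weight multipliers, is the main technical obstacle; it is essentially the same identity that underlies the Gross-Kohnen-Zagier formula~(\ref{Waldspurger}) for individual eigenforms.
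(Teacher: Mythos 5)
Your reduction of the two sides to Poincar\'e series is essentially the paper's first step (the left-hand side becomes, up to the explicit Petersson constant, the $(D,r)$-th coefficient of $P^{J}_{k+1,N,(D,r)}|T_{m}$, equivalently the $m$-th coefficient of $\mathcal{S}_{D,r}\bigl(P^{J}_{k+1,N,(D,r)}\bigr)$; the right-hand side becomes the $m$-th coefficient of $R_{2k,N}(\cdot,k,\chi_{D})$). Two caveats even here: you invoke self-adjointness of $T_{m}$ on $J^{\cusp}_{k+1,N}$, which is unnecessary and not obviously available for $m$ sharing factors with $N$ (the theorem allows arbitrary $m$); the paper instead takes the $m$-th Fourier coefficient of an identity of cusp forms and uses the Eichler--Zagier fact that the $m$-th coefficient of $\mathcal{S}_{D,r}(\phi)$ equals $c_{\phi|T_{m}}(D,r)$, which needs no Hecke theory on the Jacobi side.

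The genuine gap is in what you call the heart of the proof. First, your bookkeeping of Bessel functions is wrong: the $m$-th coefficient of the kernel $R_{2k,N}(\cdot,k,\chi_{D})$ does \emph{not} involve $J_{2k-1}$; at the central point $s=k$ the confluent hypergeometric factor ${}_{1}F_{1}(k,2k;\cdot)$ degenerates to a Bessel function of order $k-1/2$, and it is exactly this coincidence with the order $k-1/2$ appearing in the Fourier coefficients of the Jacobi Poincar\'e series that makes a termwise comparison possible. With the orders $J_{2k-1}$ (elliptic Poincar\'e unfolding) and $J_{k-1/2}$ (Jacobi side) no termwise Kloosterman-sum identity can exist, so the comparison you propose cannot be carried out as stated. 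Second, the appeal to a ``classical Shimura--Niwa--Kohnen--Zagier identity'' via the theta decomposition into half-integral weight forms on $\Gamma_{0}(4N)$ is a black box, not an argument: no precise identity is formulated, and for general level $N$ this is essentially assuming the Shimura-correspondence machinery that the paper deliberately avoids. What actually closes the gap in the paper is elementary: after matching the Bessel orders at $s=k$, the identity $R_{2k,N}\bigl(\tau,k,\chi_{D}\bigr)=\mathcal{S}_{D,r}\bigl(P^{J}_{k+1,N,(D,r)}\bigr)$ reduces to the equality of finite exponential sums $S_{N,n}(m,D)=K_{N,n}(m,D)$, which is proved by combining Lemma II.3 of Gross--Kohnen--Zagier with a new lemma parametrising the residues $b\ (2n)$ with $b^{2}\equiv D^{2}\ (4n)$, $b\equiv D\ (2N)$ by pairs $(a,c)$ coming from $\Gamma_{0}(N)$-representatives of quadratic forms and evaluating the genus character $\chi_{D}$ on them. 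Without an explicit substitute for this step, your proposal does not yield the theorem, nor the constant $2^{2k-1}N^{k-1}$ (which in the paper comes from the Jacobi Petersson norm of $P^{J}_{k+1,N,(D,r)}$ together with the Legendre duplication formula, not from any Kloosterman-sum identity).
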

	
	We remark that the formula does not mention the Shimura correspondence, that is, the forms $\phi$ and $f$ need not be related to each other. Further, $\phi$ and $f$ are not required to be newforms or Hecke eigenforms. We see that the left hand side does not depend on the choice of $r$ with $r^{2} \equiv D(4N)$, although the coefficient $c_{\phi}(D,r)$ of $\phi$ is in general not independent of $r$.
	
	For the proof of the theorem we look at the Fourier expansion of the kernel function $R_{2k,N,1}\big(\tau,s,\big(\frac{D}{\cdot}\big)\big)$ at the critical point $s = k$ and show that it equals the image of the $(D,r)$-th Poincar\'e series $P^{J}_{k+1,N,(D,r)} \in J^{\cusp}_{k+1,N}$ defined in \cite{GKZ} under the $(D,r)$-th lifting map 
	\begin{align}\label{ShimuraLift}
	\mathcal{S}_{D,r}(\phi) (w) = \sum_{m=1}^{\infty}\left(\sum_{d\mid m}\left(\frac{D}{d}\right)d^{k-1}c_{\phi}\left(\frac{m^{2}}{d^{2}}D, \frac{m}{d}r \right)\right)e(mw)
	\end{align}
	from $J^{\cusp}_{k+1,N}$ to $S_{2k}(N)$ (see \cite{GKZ}).
	Using the Petersson coefficient formulas for the corresponding Poincar\'e series we obtain the equation
	\begin{align*}
	\sum_{\phi \in J^{\cusp}_{k+1,N}}\frac{\overline{c_{\phi}(D,r)}}{\langle \phi,\phi\rangle}\S_{D,r}(\phi) &=\S_{D,r}\big(P^{J}_{k+1,N,(D,r)}\big) \\
	& = R_{2k,N,1}\big(\tau,k,\big(\tfrac{D}{\cdot}\big)\big) = \sum_{f \in S_{2k}(N)}\frac{\overline{L(f,D,k)}}{\langle f,f\rangle}f
	\end{align*}
	and taking the $m$-th Fourier coefficient yields the result.
	
	We would like to emphasise that our proof does not need Hecke theory and does not make use of any properties of the Shimura correspondence.
	
	Iwaniec \cite{Iwaniec} gave another averaged version of Waldspurger's theorem which is very different from our formula, e.g. it involves averaging over forms of different levels.

	Finally we remark that the techniques of this paper can be generalised to yield an averaged Waldspurger formula for Hilbert-Jacobi cusp forms and an analogue of Theorem \ref{Nonvanishing} for Hilbert cusp forms over totally real number fields of narrow class number $1$. The details will be part of the author's PhD thesis.
		
	\section{The kernel function for the twisted $L$-function of a cusp form}
	 
		In the following let $k,N$ be positive integers with $k \geq 3$. Further, let $\psi$ be a Dirichlet character mod $N$ with $\psi(-1) = (-1)^{k}$ and let $\chi$ be a primitive Dirichlet character mod $h$ with $(N,h) =1$. Throughout we write $e(z) := e^{2\pi iz}$ for $z \in \C$.
		
	Inspired by \cite{KohnenNonvanishing} we define the function
	\[
	R_{k,N,\psi}(\tau,s,\chi) = \sum_{m=1}^{\infty}\overline{\chi(m)}m^{s-1}P_{k,N,\psi,m}(\tau)
	\]
	where 
	\[
	P_{k,N,\psi,m}(\tau) = \sum_{\left(\begin{smallmatrix}a & b \\ c & d \end{smallmatrix} \right) \in \Gamma_{\infty}\setminus \Gamma_{0}(N)}\overline{\psi(d)}(c\tau+d)^{-k}e\left(m\frac{a\tau + b}{c\tau + d} \right)
	\]
	with $\Gamma_{\infty} = \left\{\pm\left(  \begin{smallmatrix}1 & n \\ 0 & 1 \end{smallmatrix}\right): n \in \Z\right\}$ is the $m$-th Poincar\'e series in $S_{k}(N,\psi)$. For $1 < \Re(s) < k-1$ the function $R_{k,N,\psi}(\tau,s,\chi)$ defines a cusp form in $S_{k}(N,\psi)$. Using the Petersson coefficient formula
	\begin{align}\label{PeterssonCoefficientFormula}
	\langle f,P_{k,N,\psi,m} \rangle = \frac{\Gamma(k-1)}{(4\pi m)^{k-1}}a_{f}(m)
	\end{align}
	for $f = \sum_{m =1}^{\infty}a_{f}(m)e(m\tau) \in S_{k}(N,\psi)$ we obtain
	\begin{align}\label{KernelEquation1}
	\langle f,R_{k,N,\psi}(\cdot,s,\chi)\rangle = \frac{\Gamma(k-1)}{(4\pi)^{k-1}}L(f,\chi,k-\overline{s}).
	\end{align}
	We now compute the Fourier expansion of $R_{k,N,\psi}(\tau,s,\chi)$:
	
	\begin{Proposition}
		For $1 < \Re(s) < k-1$ the $m$-th Fourier coefficient of $R_{k,N,\psi}(\tau,s,\chi)$ is given by
		\begin{align*}
		&\overline{\chi(m)}m^{s-1} + \delta_{N,1}\chi(-1) i^{-k}h^{2s-k}(2\pi)^{k-2s}\frac{\Gamma(s)}{\Gamma(k-s)} \frac{G(\overline{\chi})}{G(\chi)}\chi(m)m^{k-s-1}\\
		&+\frac{1}{2}i^{-k}(2\pi)^{k-s}\frac{\Gamma(s)}{\Gamma(k)}m^{k-1}\frac{h^{s}}{G(\chi)}\\
		&\times \sum_{\ell (h)}\chi(\ell) \sum_{\substack{a,c \in \Z, (a,c) = 1, N \mid c \\ (ha+\ell c)c > 0 }}\psi(a)c^{-k}\left(\frac{c}{ha+\ell c}\right)^{s}\bigg(e^{\pi i s/2}e^{2\pi im\bar{a}/c} \ _{1}F_{1}\left(s,k;-\frac{2\pi i mh}{(ha+\ell c)c}\right) \\
		&  +\psi(-1)\chi(-1) e^{-\pi i s/2}e^{-2\pi i m\bar{a}/c}\ _{1}F_{1}\left(s,k;\frac{2\pi i mh}{(ha+\ell c)c}\right)\bigg),
		\end{align*}
		where $\delta_{N,1}$ is the Kronecker delta, $\bar{a}$ is any integer with $a\bar{a} \equiv 1 (c)$, $G(\chi) = \sum_{\ell(h)}\chi(\ell)e(\ell/h)$ denotes the Gauss sum of $\chi$ and $_{1}F_{1}(a,b;z)$ is Kummer's confluent hypergeometric function (denoted by $M(a,b,z)$ in \cite{Abramowitz}, 13.1.2).
	\end{Proposition}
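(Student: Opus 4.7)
The plan is to compute the Fourier expansion of $R_{k,N,\psi}(\tau,s,\chi)$ by unfolding, using a twisted version of the Lipschitz summation formula followed by a standard Fourier coefficient calculation. This parallels the approach of Kohnen~\cite{KohnenNonvanishing} and Raghuram~\cite{Raghuram}, with the character $\chi$ handled via the Gauss sum identity $\overline{\chi(m)}=G(\chi)^{-1}\sum_{\ell(h)}\chi(\ell)e(m\ell/h)$.

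First, inserting this Gauss sum expression into the classical Lipschitz formula gives the twisted identity
\[
\sum_{m=1}^{\infty}\overline{\chi(m)}\,m^{s-1}e(mz)=\frac{\Gamma(s)\,h^{s}}{(-2\pi i)^{s}G(\chi)}\sum_{j\in\Z}\chi(j)(hz+j)^{-s}
\]
for $\Im z>0$ and $\Re s>1$. Applying it at $z=\gamma\tau$ in the definition of $R_{k,N,\psi}$, interchanging summations, and using the algebraic identity $h\gamma\tau+j=(c\tau+d)^{-1}\bigl((ha+jc)\tau+(hb+jd)\bigr)$ yields
\[
R_{k,N,\psi}(\tau,s,\chi)=\frac{\Gamma(s)h^{s}}{(-2\pi i)^{s}G(\chi)}\sum_{\gamma\in\Gamma_{\infty}\backslash\Gamma_{0}(N)}\overline{\psi(d)}(c\tau+d)^{-k}\sum_{j\in\Z}\chi(j)(j+h\gamma\tau)^{-s}.
\]
The identity coset ($c=0$) contributes, by the inverse twisted Lipschitz formula applied at $z=\tau$, exactly $\sum_{m\geq 1}\overline{\chi(m)}m^{s-1}e(m\tau)$, producing the leading term $\overline{\chi(m)}m^{s-1}$ of the $m$-th Fourier coefficient.

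For $c>0$ I parameterise the cosets by $(c,d_{0})$ with $N\mid c$ and $d_{0}\in(\Z/c\Z)^{\times}$, pick $a_{0}$ with $a_{0}d_{0}\equiv 1\pmod c$, and use $\gamma\tau=a_{0}/c-1/(c(c\tau+d_{0}))$. The sum over $n\in\Z$ coming from $d=d_{0}+nc$, combined with integration against $e(-m\tau)$ over a period of length one, unfolds to an integral over $\R$. After substituting $u=c\tau+d_{0}$ the $m$-th Fourier coefficient becomes a sum over $(c,d_{0},j)$ of integrals
\[
c^{s-1}\overline{\psi(d_{0})}e(md_{0}/c)\chi(j)\int_{-\infty}^{\infty}u^{s-k}\bigl((cj+ha_{0})u-h\bigr)^{-s}e\!\left(-\tfrac{mu}{c}\right)du.
\]
Re-indexing $j=\ell+m'h$, $a=a_{0}+m'c$ (so that $ha+\ell c=cj+ha_{0}$), together with $\psi(a)=\overline{\psi(d_{0})}$ and $\bar a\equiv d_{0}\pmod c$, turns this into a sum over $(\ell,a,c)$ with $\ell$ modulo $h$, $(a,c)=1$ and $N\mid c$, matching the indexing of the statement.

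Each inner integral is then evaluated by contour deformation around the branch points $u=0$ and $u=h/(ha+\ell c)$. Applying Euler's (or Tricomi's) integral representation of ${}_{1}F_{1}$ together with Kummer's transformation ${}_{1}F_{1}(a,b;z)=e^{z}{}_{1}F_{1}(b-a,b;-z)$ produces the two-term combination with phases $e^{\pm\pi is/2}$ and arguments $\mp 2\pi imh/((ha+\ell c)c)$. The constraint $(ha+\ell c)c>0$ arises from the direction of convergence of the deformed contour; contributions with the opposite sign of $ha+\ell c$ are folded back via $a\mapsto -a$, producing the parity factor $\psi(-1)\chi(-1)$ on the second summand. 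Collecting all the prefactors from the Lipschitz formula, the substitution, and the integral evaluation yields the main double sum exactly as stated.

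Finally, the $\delta_{N,1}$ term comes from the degenerate integrals with $cj+ha_{0}=0$. The condition $c\mid ha_{0}$ with $(a_{0},c)=1$ forces $c\mid h$; combined with $N\mid c$ and $(N,h)=1$ this forces $N=1$. The integral then collapses to $(-h)^{-s}\int_{\R}u^{s-k}e(-mu/c)du$, whose distributional value is proportional to $\Gamma(k-s)^{-1}m^{k-s-1}$; summing the finitely many admissible $(c,a_{0})$ (with $c\mid h$ and $a_{0}\in(\Z/c\Z)^{\times}$) against $\chi(-ha_{0}/c)$ assembles the Gauss sum ratio $G(\overline{\chi})/G(\chi)$ together with the remaining $(2\pi)^{k-2s}h^{2s-k}$ and $\chi(-1)i^{-k}\Gamma(s)/\Gamma(k-s)$ prefactors, giving precisely the stated term. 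The main technical hurdle will be the careful contour analysis, in particular tracking the phases $e^{\pm\pi is/2}$ and branch choices to recover the exact hypergeometric arguments, and correctly handling the degenerate contributions that produce the $\delta_{N,1}$ piece.
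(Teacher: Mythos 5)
Your proposal is correct and takes essentially the same route as the paper: the Gauss-sum/Lipschitz conversion of the $m$-sum into a lattice sum, the split into the identity coset, the degenerate terms (which force $N=1$), and the non-degenerate terms evaluated by the Kohnen-type Fourier integral producing the ${}_{1}F_{1}$'s, with the opposite-sign contributions folded back via $a\mapsto-a$, $\ell\mapsto-\ell$ to give the factor $\psi(-1)\chi(-1)$. The only cosmetic difference is your treatment of the degenerate piece $ha+\ell c=0$: you evaluate it directly as a line integral of a power function (a convergent integral for $\Re(s)<k-1$, so no distributional interpretation is needed) and assemble $G(\overline{\chi})/G(\chi)$ from the resulting character sum, whereas the paper reduces this piece to a second application of the Lipschitz formula with $s$ replaced by $k-s$ via an explicit matrix identity; the two computations are equivalent.
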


	\begin{proof}
		The proof generalises that of Lemma 2 in \cite{KohnenNonvanishing}. We will frequently use the Lipschitz summation formula
		\begin{align}\label{Lipschitz}
		\sum_{m=-\infty}^{\infty}(m+\tau)^{-s} = \frac{(-2\pi i)^{s}}{\Gamma(s)}\sum_{m=1}^{\infty}m^{s-1}e(m\tau)
		\end{align}
		which is valid for $\tau \in \H$ and $\Re(s) > 1$. Here $e(z) = e^{2\pi i z}$ for $z \in \C$. Since $\chi$ is primitive, we have
		\begin{align}\label{CharacterDecomposition}
		\overline{\chi(m)} = \frac{1}{G(\chi)}\sum_{\ell(h)}\chi(\ell)e(\ell m/h).
		\end{align}
		The Lipschitz formula (\ref{Lipschitz}) together with (\ref{CharacterDecomposition}) shows that $R_{k,N,\psi}(\tau,s,\chi)$ can be written as
		\begin{align}
		&\sum_{\left(\begin{smallmatrix}a & b \\ c & d \end{smallmatrix} \right) \in \Gamma_{\infty}\setminus \Gamma_{0}(N)}\overline{\psi(d)}(c\tau + d)^{-k}\sum_{m=1}^{\infty}\overline{\chi(m)}m^{s-1}e\left(m\frac{a\tau+b}{c\tau + d}\right) \\
		&=\frac{\Gamma(s)}{G(\chi)(-2\pi i)^{s}} \sum_{\left(\begin{smallmatrix}a & b \\ c & d \end{smallmatrix} \right) \in \Gamma_{\infty}\setminus \Gamma_{0}(N)}\overline{\psi(d)}(c\tau + d)^{-k}\sum_{\ell(h)}\chi(\ell)\sum_{m=-\infty}^{\infty}\left(\frac{\ell}{h} + m + \frac{a\tau+b}{c\tau +d} \right)^{-s} \\
		&= \frac{1}{2}\frac{\Gamma(s)}{G(\chi)(-2\pi i)^{s}} \sum_{\ell (h)}\chi(\ell)\sum_{\left(\begin{smallmatrix}a & b \\ c & d \end{smallmatrix} \right) \in \Gamma_{0}(N)}\overline{\psi(d)}\tau^{-s}\bigg|_{k}\begin{pmatrix}1 & \frac{\ell}{h} \\ 0 & 1 \end{pmatrix}\begin{pmatrix}a & b \\ c & d \end{pmatrix} \label{Pieces}
		\end{align} 
		where $f|_{k}\left(\begin{smallmatrix}a & b \\ c & d \end{smallmatrix} \right)(\tau) = (c\tau+d)^{-k}f\big(\frac{a\tau+b}{c\tau+d}\big)$ denotes the usual weight $k$ slash operator.
		
		We first compute the Fourier coefficients of the inner sum over $\Gamma_{0}(N)$ for fixed $\ell$ by splitting it into three pieces corresponding to $c = 0, a + \frac{\ell}{h}c = 0$ and $(a + \frac{\ell}{h}c)c \neq 0$.
		
		For $c = 0$ we have to sum over the matrices $\pm\left(\begin{smallmatrix}1 & b \\ 0 & 1 \end{smallmatrix}\right)$ with $b \in \Z$:
		\begin{align*}
		\sum_{\substack{\left( \begin{smallmatrix} a & b \\ c & d \end{smallmatrix}\right) \in \Gamma_{0}(N) \\ c = 0}}\overline{\psi(d)}\tau^{-s}\bigg|_{k}\begin{pmatrix}1 & \frac{\ell}{h} \\ 0 & 1 \end{pmatrix}\begin{pmatrix} a & b \\ c & d \end{pmatrix} &= 2\sum_{b = -\infty}^{\infty}\left(b+\tau+ \frac{\ell}{h} \right)^{-s}  \\
		&= 2\frac{(-2\pi i )^{s}}{\Gamma(s)}\sum_{m=1}^{\infty}m^{s-1}e(m\tau)e\left(\frac{m\ell}{h}\right)
		\end{align*}
		where we again used the Lipschitz formula and the fact that two matrices with opposite sign give the same contribution. 
		
		Now suppose we have a matrix $\left(\begin{smallmatrix}a & b \\ c & d \end{smallmatrix}\right) \in \Gamma_{0}(N)$ with $a + \frac{\ell}{h}c = 0$. We may assume that $\ell$ and $h$ are coprime, since otherwise $\chi(\ell) = 0$. Then the only integral solutions of $a + \frac{\ell}{h}c = 0$ with $(a,c) = 1$ are $a = \pm \ell, c = \mp h$. This is only possible if $N \mid h$, and as $(N,h) = 1$, it is only possible for $N = 1$. In this case we write
		\begin{align*}
		\begin{pmatrix}1 & \frac{\ell}{h} \\ 0 & 1\end{pmatrix} &= 
		\begin{pmatrix}0 & \frac{1}{h} \\ -h & 0 \end{pmatrix}
		\begin{pmatrix}1 & -\frac{\bar{\ell}}{h} \\ 0 & 1 \end{pmatrix}
		\begin{pmatrix} \bar{\ell} & \frac{\ell\bar{\ell} - 1}{h} \\ h & \ell \end{pmatrix}
		\end{align*}
		where $\bar{\ell}$ is an integer with $\ell \bar{\ell} = 1 (h)$. Note that $\left(\begin{smallmatrix} \bar{\ell} & \frac{\ell\bar{\ell} - 1}{h} \\ h & \ell \end{smallmatrix} \right)\in \Gamma_{0}(N) = \SL_{2}(\Z)$. So if $N = 1$ the character $\psi$ is trivial and we obtain
		\begin{align*}
		&\sum_{\substack{\left(\begin{smallmatrix}a & b \\ c & d \end{smallmatrix}\right) \in \Gamma_{0}(N) \\ a + \frac{\ell}{h}c = 0}}\tau^{-s}\bigg|_{k}\begin{pmatrix}1 & \frac{\ell}{h} \\ 0 & 1 \end{pmatrix}\begin{pmatrix} a & b \\ c & d \end{pmatrix}  \\
		&= (-1)^{-s}h^{2s-k}\sum_{\substack{\left(\begin{smallmatrix}a & b \\ c & d \end{smallmatrix}\right) \in \Gamma_{0}(N) \\ a + \frac{\ell}{h}c = 0}}\tau^{s-k}\bigg|_{k} \begin{pmatrix}1 & -\frac{\bar{\ell}}{h} \\ 0 & 1 \end{pmatrix}\begin{pmatrix}\bar{\ell} & \frac{\ell\bar{\ell}-1}{h} \\ h & \ell\end{pmatrix}\begin{pmatrix} a & b \\ c & d \end{pmatrix} \\
		&=  (-1)^{-s}h^{2s-k}\sum_{\substack{\left(\begin{smallmatrix}a & b \\ c & d \end{smallmatrix}\right) \in \Gamma_{0}(N) \\ c = 0}}\tau^{s-k}\bigg|_{k} \begin{pmatrix}1 & -\frac{\bar{\ell}}{h} \\ 0 & 1 \end{pmatrix}\begin{pmatrix} a & b \\ c & d \end{pmatrix}.
		\end{align*}
		Now the same calculation as in the case $c = 0$ but with $s$ replaced by $k-s$ shows that for $\Re(s) < k-1$ the last expression equals
		\[
		2(-1)^{-s}h^{2s-k}\frac{(-2\pi i)^{k-s}}{\Gamma(k-s)}\sum_{m=1}^{\infty}m^{k-s-1}e(m\tau)e\left( -\frac{m\bar{\ell}}{h}\right).
		\]

		For $(a + \frac{\ell}{h}c)c \neq 0$ we have to compute the Fourier integral
		\begin{align}\label{FourierIntegral}
		\int_{iC}^{iC+1}\sum_{\substack{\left(\begin{smallmatrix}a & b \\ c & d \end{smallmatrix} \right) \in \Gamma_{0}(N) \\ (a + \frac{\ell}{h}c)c \neq 0}}\overline{\psi(d)}(c\tau +d)^{-k}\left(\frac{(a + \frac{\ell}{h}c)\tau + b + \frac{\ell}{h}d}{c\tau + d}\right)^{-s}e(-m\tau)d\tau, \qquad C > 0.
		\end{align}
		The calculation can be done in the same way as in the case $ac \neq 0$ in the proof of Lemma 2 in \cite{KohnenNonvanishing}, so we omit the details. The result for the part with $(a + \frac{\ell}{h}c)c > 0$ is
		\[
		i^{-k}\frac{(2\pi)^{k}}{\Gamma(k)}m^{k-1}\sum_{\substack{(a,c) =1 , N\mid c \\ (a + \frac{\ell}{h}c)c > 0}}\psi(a)c^{-k}e(m\bar{a}/c)\left(\frac{c}{(a + \frac{\ell}{h}c)c} \right)^{s} \ _{1}F_{1}\left(s,k;-\frac{2\pi i m}{(a + \tfrac{\ell}{h}c)c}\right)
		\]
		where $a\bar{a} = 1 (c)$. If we replace $a$ by $-a$ and $b$ by $-b$ in the part with $(a + \frac{\ell}{h}c)c < 0$ in (\ref{FourierIntegral}), the sum runs over all integral matrices $\left(\begin{smallmatrix} a & b \\ c & d \end{smallmatrix}\right)$ with determinant $ad - bc= -1$, $N \mid c$ and $(a - \frac{\ell}{h}c)c > 0$. Now this part can be computed in the same way as the part for $(a+\frac{\ell}{h}c)c > 0$ and gives the contribution
		\[
		i^{-k}(-1)^{-s}\frac{(2\pi)^{k}}{\Gamma(k)}m^{k-1}\sum_{\substack{(a,c) =1 , N\mid c \\ (a - \frac{\ell}{h}c)c > 0}}\psi(-a)c^{-k}e(-m\bar{a}/c)\left(\frac{c}{(a - \frac{\ell}{h}c)c} \right)^{s} \ _{1}F_{1}\left(s,k;\frac{2\pi i m}{(a - \tfrac{\ell}{h}c)c}\right).
		\]

		We now insert everything into (\ref{Pieces}). In the pieces corresponding to $(a + \frac{\ell}{h}c)c = 0$ and $(a + \frac{\ell}{h}c)c < 0$ we sum over $-\bar{\ell}$ and $-\ell$ instead of $\ell$, respectively, giving a factor $\overline{\chi(-1)} = \chi(-1)$ in both cases. Taking into account (\ref{CharacterDecomposition}) and $(-i)^{-s} = e^{\pi i  s/2}, (-1)^{-s} = e^{- \pi i s},$ we obtain the stated Fourier expansion.
	\end{proof}
		
	\begin{proof}[Proof of Theorem \ref{Nonvanishing}] We follow the arguments of \cite{KohnenNonvanishing}: Let $\chi$ be a primitive Dirichlet character mod $h$ as above. From (\ref{KernelEquation1}) it follows that
	\begin{align}\label{KernelEquation2}
	\sum_{f \in S_{k}(N,\psi)}\frac{\overline{L(f,\chi,k-\overline{s})}}{\langle f,f\rangle}f(\tau) = \frac{(4\pi)^{k-1}}{\Gamma(k-1)}R_{k,N,\psi}(\tau,s,\chi),
	\end{align}
	where the sum is taken over an orthogonal basis of $S_{k}(N,\psi)$. Suppose that $s_{0} = k/2 - \delta - it_{0}$ with $\varepsilon < \delta \leq \frac{1}{2}$ is a zero of the $m$-th Fourier coefficient (with $(m,h) = 1$) of the left-hand side of (\ref{KernelEquation2}). Then also the $m$-th coefficient of $R_{k,N,\psi}(\tau,s_{0},\chi)$ vanishes:
	\begin{align*}
		&-\overline{\chi}(m)m^{k/2-\delta-it_{0}-1} \\
		&=\delta_{N,1}\chi(-1) i^{-k}(2\pi/h)^{2\delta+2it_{0}}\frac{\Gamma(k/2-\delta-it_{0})}{\Gamma(k/2+\delta+it_{0})} \frac{G(\overline{\chi})}{G(\chi)}\chi(m)m^{k/2+\delta+it_{0}-1}\\
		&+\frac{1}{2}i^{-k}(2\pi)^{k/2+\delta+it_{0}}\frac{\Gamma(k/2-\delta-it_{0})}{\Gamma(k)}m^{k-1}\frac{h^{k/2-\delta-it_{0}}}{G(\chi)}\\
		&\times \sum_{\ell (h)}\chi(\ell) \sum_{\substack{a,c \in \Z, (a,c) = 1, N \mid c \\ (ha+\ell c)c > 0 }}\psi(a)c^{-k/2-\delta-it_{0}}(ha+\ell c)^{-k/2+\delta+it_{0}}\\
		&\bigg(e^{\pi i (k/2-\delta-it_{0})/2}e^{2\pi i\bar{a}/c} \ _{1}F_{1}\left(k/2-\delta-it_{0},k;-\frac{2\pi imh}{(ha+\ell c)c}\right)   \\
		&+\psi(-1)\chi(-1) e^{-\pi i (k/2-\delta-it_{0})/2}e^{-2\pi i \bar{a}/c}\ _{1}F_{1}\left(k/2-\delta-it_{0},k;\frac{2\pi im h}{(ha+\ell c)c}\right)\bigg).
		\end{align*}
		
	For $\Re(\beta) > \Re(\alpha) > 0$ the integral representation
	\[
	_{1}F_{1}(\alpha,\beta;z) = \frac{\Gamma(\beta)}{\Gamma(\alpha)\Gamma(\beta-\alpha)}\int_{0}^{1}e^{tz}t^{\alpha-1}(1-t)^{\beta-\alpha-1}dt
	\]
	from \cite{Abramowitz}, 13.2.1, gives the estimate
	\begin{align}\label{HypergeometricFunctionEstimate}
	|_{1}F_{1}(\alpha,\beta;2\pi i x)| \leq \frac{\Gamma(\beta)}{\Gamma(\alpha)\Gamma(\beta-\alpha)}
	\end{align}
	for $\Re(\alpha) > 1, \Re(\beta-\alpha) > 1$ and $x \in \R$.
		
	We now take absolute values and divide everything by $m^{k/2-1}$. Using $|G(\chi)| = |G(\overline{\chi})| = \sqrt{h}$ and the estimate above we obtain
	\begin{align}\label{Estimate}
	m^{-\delta} &\leq \delta_{N,1}(2\pi/h)^{2\delta}m^{\delta}\frac{|\Gamma(k/2-\delta-it_{0})|}{|\Gamma(k/2+\delta+it_{0})|} + N^{-k/2-\delta}\frac{(2\pi)^{k/2}m^{k/2}h^{k/2}}{|\Gamma(k/2+\delta+it_{0})|}K
	\end{align}
	with some constant $K > 0$ independent of $k$ and $N$. We have
	\[
	\frac{\Gamma(k/2-\delta-it_{0})}{\Gamma(k/2+\delta+it_{0})}(k/2)^{2\delta+2it_{0}} \to 1 \quad (k \to \infty)
	\]
	uniformly in $\delta$ by \cite{Abramowitz}, 6.1.47, hence the first summand in (\ref{Estimate}) tends to $0$ as $k \to \infty$. For fixed level $N$ the second summand in (\ref{Estimate}) goes to $0$ for $k \to \infty$ as well, hence we get a contradiction for large $k$.
	
	If we fix the weight $k$, the first summand in $(\ref{Estimate})$ is $0$ for $N \geq 2$, and the second summand tends to $0$ for $N \to \infty$, also giving a contradiction.
	
	For $s_{0} = k/2 + \delta - it_{0}$ the statement follows for $N = 1$ by the functional equation of (the completion of) $L(f,\chi,s)$ and for $N > 1$ by the same arguments as above.
	\end{proof}
	
	\section{Waldspurger's Theorem on Average}
	
	Now we take $\psi = 1$, $h = |D|$ where $D$ is a negative fundamental discriminant with $(D,N) = 1$, and $\chi = \big( \frac{D}{\cdot}\big)$. Note that for a negative fundamental discriminant $D$ the Kronecker symbol $\big(\frac{D}{\cdot} \big)$ is a primitive quadratic Dirichlet character mod $|D|$ with $\big( \frac{D}{-1}\big) = - 1$ and Gauss sum $G\big(\big(\frac{D}{\cdot}\big)\big):= \sum_{\ell(D)}\big(\frac{D}{\ell}\big)e(\ell/|D|) = i\sqrt{|D|}$. We will show that
	\begin{align}\label{PoincareIdentity}
		R_{2k,N}\big(\tau,k,\big(\tfrac{D}{\cdot}\big)\big) = \mathcal{S}_{D,r}\left(P^{J}_{k+1,N,(D,r)}\right)
	\end{align}
	where we dropped $\psi = 1$ from the notation and where $r \in \Z$ is chosen such that $r^{2} \equiv D (4N)$. Here $\mathcal{S}_{D,r}$ is the lifting map (\ref{ShimuraLift}) from $J_{k+1,N}^{\cusp}$ to $S_{2k}(N)$ and $P^{J}_{k+1,N,(D,r)} \in J^{\cusp}_{k+1,N,(D,r)}$ is the $(D,r)$-th Poincar\'e series for the Jacboi group characterised by the coefficient formula
	\begin{align}\label{JacobiPoincareSeries}
	\langle \phi,P^{J}_{k+1,N,(D,r)} \rangle = \frac{N^{k-1}\Gamma(k-1/2)}{2\pi^{k-1/2}}|D|^{-k+1/2}c_{\phi}(D,r)
	\end{align}
	for every cusp form $\phi = \sum_{D < 0}\sum_{\substack{r^{2} \equiv D (4N)}}c_{\phi}(D,r)e\left(\frac{r^{2}-D}{4N}\tau + rz \right) \in J_{k+1,N}^{\cusp}$ (see \cite{GKZ}, Proposition II.1).

	\begin{Proposition}
		The $m$-th Fourier coefficient of $R_{2k,N}\big(\tau,k,\big(\frac{D}{\cdot}\big)\big)$ is given by
		\begin{align*} 
		(1\pm \delta_{N,1})\left( \frac{D}{m}\right)m^{k-1} + i^{k+1}\pi\sqrt{2}m^{k-1/2}\sum_{\substack{n\geq 1 \\ N \mid n}}n^{-1/2}K^{\pm}_{N,n}(m,D)J_{k-1/2}\left( \frac{\pi}{n}m|D|\right)
		\end{align*}
		where $\pm 1 = (-1)^{k+1}$, $\delta_{N,1}$ is the Kronecker delta, $J_{k-1/2}$ is the Bessel function of order $k-1/2$,
		\[
		K_{N,n}(m,D) = \sum_{\ell(D)}\left(\frac{D}{\ell} \right)\sum_{\substack{(a,c) \in \Z^{2} \\ (a,c) = 1,  c > 0, N \mid c\\ (|D|a + \ell c)c = n}}e_{2n}\left(m\left(|D| - 2(|D|a + \ell c)\bar{a}\right) \right)
		\]
		with $a\bar{a} \equiv 1 (c)$ is a finite exponential sum, $K^{\pm}_{N,n}(m,D) = K_{N,n}(m,D) \pm K_{N,n}(-m,D)$ and $e_{2n}(z) = e^{2\pi i z/2n}$ for $z \in \C$.
	\end{Proposition}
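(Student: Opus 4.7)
The plan is to obtain the formula by specialising the previous Proposition to $\psi=1$, weight $2k$, $h=|D|$, $\chi=\left(\tfrac{D}{\cdot}\right)$, $s=k$, and then converting the confluent hypergeometric functions to Bessel functions via Kummer's identity. I would begin by recording the relevant values of the constants attached to $\chi$: since $D<0$ is a negative fundamental discriminant, $\chi(-1)=-1$, $\chi$ is real so $G(\overline\chi)/G(\chi)=1$, and $G(\chi)=i\sqrt{|D|}$.

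The first two summands of the previous formula are then straightforward to simplify. The leading $\overline{\chi(m)}m^{s-1}$ becomes $\bigl(\tfrac{D}{m}\bigr)m^{k-1}$. The $\delta_{N,1}$ term evaluates, at $s=k$ and weight $2k$, to $\chi(-1)i^{-2k}\cdot 1\cdot 1\cdot\chi(m)m^{k-1}=(-1)^{k+1}\bigl(\tfrac{D}{m}\bigr)m^{k-1}$, giving the combined contribution $\bigl(1+(-1)^{k+1}\delta_{N,1}\bigr)\bigl(\tfrac{D}{m}\bigr)m^{k-1}$, which is exactly the claimed polynomial piece $(1\pm\delta_{N,1})\bigl(\tfrac{D}{m}\bigr)m^{k-1}$ with $\pm=(-1)^{k+1}$.

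For the main sum I would set $n=(|D|a+\ell c)c$, so that the weight factor collapses to $c^{-2k}\bigl(\tfrac{c}{|D|a+\ell c}\bigr)^k=n^{-k}$, and the argument of the hypergeometric functions becomes $\mp 2\pi i\,m|D|/n$. Writing $y=\pi m|D|/n>0$ and using the Kummer identity (equivalent to \cite{Abramowitz}, 13.6.3)
\[
{}_1F_1(k,2k;\pm 2iy)=\Gamma(k+\tfrac12)\Bigl(\tfrac{2}{y}\Bigr)^{k-1/2}e^{\pm iy}J_{k-1/2}(y),
\]
together with the identity $e^{\pm i\pi m|D|/n}\,e^{\mp 2\pi i m\bar a/c}=e_{2n}\!\bigl(\mp m(|D|-2(|D|a+\ell c)\bar a)\bigr)$ (using $(|D|a+\ell c)/n=1/c$), the two hypergeometric terms are converted into $J_{k-1/2}(y)$ multiplied respectively by the summand of $K_{N,n}(-m,D)$ and $K_{N,n}(m,D)$. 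The two pieces combine, using $e^{\pi is/2}|_{s=k}=i^k$ and $\psi(-1)\chi(-1)=-1$ with $(-i)^k=(-1)^ki^k$, into $i^k\bigl[K_{N,n}(-m,D)+(-1)^{k+1}K_{N,n}(m,D)\bigr]=i^k(-1)^{k+1}K^{\pm}_{N,n}(m,D)$ with the prescribed choice of sign. The weight factor $n^{-k}$ and the $(2/y)^{k-1/2}$ coming from the Kummer identity consolidate into a single $n^{-1/2}$ times powers of $\pi m|D|$.

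Two bookkeeping points then deliver the stated constant $i^{k+1}\pi\sqrt 2\,m^{k-1/2}$. First, the substitution $(a,c)\mapsto(-a,-c)$ leaves every factor of the summand invariant, so the sum over $(a,c)$ with $(|D|a+\ell c)c>0$ equals twice the sum over $c>0$ that appears in $K_{N,n}$; this absorbs the factor $\tfrac12$ in front of the main sum. Second, using the Legendre duplication formula $\Gamma(k)\Gamma(k+\tfrac12)=2^{1-2k}\sqrt\pi\,\Gamma(2k)$ the quotient $\Gamma(k)\Gamma(k+\tfrac12)/\Gamma(2k)$ simplifies, and after cancellation the remaining powers of $2,\pi,m,|D|$ and of $i$ collapse to $i^{k+1}\pi\sqrt 2\,m^{k-1/2}$, thanks to the identity $(-1)^{k+1}i^{-(k+1)}=i^{k+1}$. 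The main obstacle is precisely this careful bookkeeping of signs, roots of unity and gamma-factor simplifications; everything else is a routine rewriting using the already-established Fourier formula, the Kummer--Bessel identity, and the parametrisation $n=(|D|a+\ell c)c$.
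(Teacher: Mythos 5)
Your proposal is correct and takes essentially the same route as the paper: specialise the general Fourier expansion to weight $2k$, $\psi=1$, $\chi=\big(\tfrac{D}{\cdot}\big)$, $h=|D|$, $s=k$, convert the $_{1}F_{1}$'s into $J_{k-1/2}$ via the Kummer/Bessel identity (Abramowitz 13.6.1 and 13.1.27), and finish with the Legendre duplication formula — and you in fact supply the bookkeeping (in particular the factor $2$ from passing to $c>0$, which cancels the $\tfrac12$, and the root-of-unity count giving $i^{k+1}$) that the paper dismisses as a straightforward calculation. One small slip: in your displayed identity $e^{\pm i\pi m|D|/n}e^{\mp 2\pi i m\bar a/c}=e_{2n}\big(\mp m(|D|-2(|D|a+\ell c)\bar a)\big)$ the right-hand side should read $\pm m$ rather than $\mp m$, but your subsequent matching of the two hypergeometric terms with the summands of $K_{N,n}(-m,D)$ and $K_{N,n}(m,D)$ (and hence the final constant) is the correct one.
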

	
	\begin{proof}
		The hypergeometric function appearing in the Fourier expansion of $R_{2k,N}\big(\tau,k,\big(\frac{D}{\cdot}\big)\big)$ is related to the Bessel function of order $k-1/2$:
		\begin{align*}
	&_{1}F_{1}\left(k,2k;\frac{2\pi i m |D|}{(|D|a+\ell c)c}\right) \\
	&\qquad= \Gamma(k+1/2)e\left(\frac{m|D|}{2(|D|a + \ell c)c}\right)\left( \frac{1}{2}\frac{\pi m |D|}{(|D|a + \ell c)c}\right)^{-k+1/2}J_{k-1/2}\left( \frac{\pi m |D|}{(|D|a + \ell c)c}\right),
	\end{align*}
	see \cite{Abramowitz}, 13.6.1. Using the Kummer transformation $_{1}F_{1}(a,b;z) = e^{z}\ _{1}F_{1}(b-a,b;-z)$, see \cite{Abramowitz}, 13.1.27, we get
	\begin{align*}
	&_{1}F_{1}\left(k,2k;-\frac{2\pi i m |D|}{(|D|a+\ell c)c}\right) = e\left(-\frac{m |D|}{(|D|a + \ell c)c}\right) \\
	&\qquad\times \Gamma(k+1/2)e\left(\frac{m|D|}{2(|D|a + \ell c)c}\right)\left( \frac{1}{2}\frac{\pi m |D|}{(|D|a + \ell c)c}\right)^{-k+1/2}J_{k-1/2}\left( \frac{\pi m |D|}{(|D|a + \ell c)c}\right).
	\end{align*}
	The formula now follows by a straightforward calculation using the Legendre duplication formula $\Gamma(k)\Gamma(k+1/2) = 2^{1-2k}\sqrt{\pi}\Gamma(2k)$.
	\end{proof}
	
	In order to prove the identity (\ref{PoincareIdentity}) we compute the Fourier expansion of the right-hand side. To this end, we need the Fourier expansion of the Poincar\'e series $P^{J}_{k+1,N,(D,r)} \in J^{\cusp}_{k+1,N}$.
	
	\begin{Proposition}[\cite{GKZ}, Proposition II.2]
		The Poincar\'e series $P^{J}_{k+1,N,(D,r)} \in J^{\cusp}_{k+1,N}$ has the expansion
		\[
		P^{J}_{k+1,N,(D,r)}(\tau,z) = \sum_{\substack{D' < 0, r' \in \Z \\ r'^{2} \equiv D'(4N)}}g^{\pm}_{k+1,N,(D,r)}(D',r')e\left(\frac{r'^{2}-D'}{4N}\tau + r'z \right) 
		\]
		where $\pm 1 = (-1)^{k+1}, g^{\pm}_{k+1,N,(D,r)}(D',r') = g_{k+1,N,(D,r)}(D',r') \pm g_{k+1,N,(D,r)}(D',-r')$ and
		\begin{align*}
		g_{k+1,N,(D,r)}(D',r') &= \delta_{N}(D,r,D',r') + i^{k+1}\pi\sqrt{2}N^{-1/2}(D'/D)^{\frac{k}{2}-\frac{1}{4}} \\
			& \qquad \times \sum_{n\geq 1}H_{N,n}(D,r,D',r')J_{k-1/2}\left(\frac{\pi}{Nn}\sqrt{D'D}\right)
		\end{align*}
		where
		\[
		\delta_{N}(D,r,D',r') = \begin{cases}
		1, & \text{if } D' = D \text{ and } r' \equiv r (2N), \\
		0, & \text{otherwise },
		\end{cases}
		\]
		and
		\[
		H_{N,n}(D,r,D',r') = n^{-3/2}\sum_{\substack{\rho(n)^{*} \\ \lambda(n)}}e_{n}\left(\left(N\lambda^{2} + r\lambda + \frac{r^{2}-D}{4N}\right)\overline{\rho} + \frac{r'^{2}-D'}{4N}\rho + r'\lambda\right)e_{2Nn}(rr')
		\]
		with $\rho\bar{\rho} \equiv 1 (n)$ is a Kloosterman-type sum.
	\end{Proposition}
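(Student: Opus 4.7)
The plan is to use the standard unfolding method for Poincar\'e series, adapted to the Jacobi setting. First, I would realize $P^{J}_{k+1,N,(D,r)}$ as the averaged sum
\[
P^{J}_{k+1,N,(D,r)}(\tau,z) = \sum_{(\gamma,X) \in \Gamma^{J}_{\infty}\backslash\Gamma^{J}} e\!\left(\tfrac{r^{2}-D}{4N}\tau + rz\right)\bigg|_{k+1,N}(\gamma,X),
\]
where $\Gamma^{J} = \SL_{2}(\Z)\ltimes\Z^{2}$ is the Jacobi group, $\Gamma^{J}_{\infty}$ is the stabilizer of the seed $e\bigl(\tfrac{r^{2}-D}{4N}\tau + rz\bigr)$ under the index-$N$ weight-$(k+1)$ Jacobi slash operator (generated by $\left(\begin{smallmatrix}1&1\\0&1\end{smallmatrix}\right)$ together with the Heisenberg element $(1,0)$), and coset representatives are chosen as pairs $(\gamma,(\lambda,0))$ with $\gamma\in\Gamma_{\infty}\backslash\SL_{2}(\Z)$ and $\lambda\in\Z$. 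As a consistency check I would verify the Petersson coefficient formula (\ref{JacobiPoincareSeries}) by the standard unfolding of $\langle\phi,P^{J}_{k+1,N,(D,r)}\rangle$ against the Fourier expansion of $\phi$.

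To extract the $(D',r')$-th Fourier coefficient I would then integrate the slashed seed over a fundamental domain in $(\tau,z)$ and split the resulting double sum according to whether the bottom row of $\gamma$ has $c=0$ or $c\neq 0$. In the $c=0$ case, combining the $\lambda\in\Z$ summation with a Lipschitz-type expansion in $\tau$ and the orthogonality of exponentials in $z$ collapses the contribution to the Kronecker delta $\delta_{N}(D,r,D',r')$. The symmetrization $g^{\pm} = g(D,r,D',r') \pm g(D,r,D',-r')$ with $\pm = (-1)^{k+1}$ reflects the action of the central element $-I\in\SL_{2}(\Z)$, which sends $(r',z)\mapsto(-r',-z)$ and picks up the weight factor $(-1)^{k+1}$ from the slash operator.

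For the $c\neq 0$ cosets, after substituting $(\tau,z)\mapsto\bigl(\tfrac{a\tau+b}{c\tau+d},\tfrac{z+\lambda\tau+\mu}{c\tau+d}\bigr)$ with the index-$N$ multiplier and unfolding the $b$-summation to extend the $\tau$-integration to the whole real line, the $z$-integral becomes Gaussian and is evaluated by completing the square, while the $\tau$-integral reduces to a Hankel-type integral $\int_{\R}(c\tau+d)^{-k-1}e(\text{quadratic in }\tau)\,d\tau$ which, upon setting $n = c/N$, evaluates in closed form to $J_{k-1/2}\!\bigl(\tfrac{\pi}{Nn}\sqrt{D'D}\bigr)$ times explicit gamma and power factors. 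The remaining finite exponential sum over $d\pmod{c}$ and $\lambda\pmod{c}$, once the square is completed in $\lambda$, assembles into precisely $H_{N,n}(D,r,D',r')$, the cross-term of the completion of the square producing the $e_{2Nn}(rr')$ phase. The main obstacle will be the bookkeeping of these exponential sums: one has to split $\lambda\in\Z$ as $\lambda_{0}+n\lambda'$ with $\lambda_{0}\pmod{n}$, confirm that the infinite $\lambda'$-sum is absorbed into the Hankel integral, and carefully track all normalizations so that the global prefactor $i^{k+1}\pi\sqrt{2}N^{-1/2}(D'/D)^{k/2-1/4}$ emerges exactly as stated.
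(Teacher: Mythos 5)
The first thing to say is that the paper does not prove this proposition at all: it is quoted verbatim from \cite{GKZ}, Proposition II.2, so there is no internal argument to compare against. Your unfolding sketch is essentially the standard proof of that result (as in \cite{GKZ} and in Eichler--Zagier), so the overall route is the right one: realize $P^{J}_{k+1,N,(D,r)}$ as an average over $\Gamma^{J}_{\infty}\backslash\Gamma^{J}$, split according to $c=0$ or $c\neq 0$, evaluate the Gaussian $z$-integral and the Hankel-type $\tau$-integral, and collect the finite exponential sums into $H_{N,n}$, with the $\pm$-symmetrization coming from $-I$ acting with the factor $(-1)^{k+1}$.

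Two slips in your setup would, however, surface exactly in the bookkeeping you flag as the main obstacle. First, the stabilizer $\Gamma^{J}_{\infty}$ of the seed $e\bigl(\tfrac{r^{2}-D}{4N}\tau+rz\bigr)$ is generated (up to sign) by $\left(\begin{smallmatrix}1&1\\0&1\end{smallmatrix}\right)$ and the Heisenberg element $(0,1)$, not $(1,0)$: the element $(\lambda,0)$ sends the seed to $e\bigl(\tfrac{r'^{2}-D}{4N}\tau+r'z\bigr)$ with $r'=r+2N\lambda$, which is precisely why your coset representatives $(\gamma,(\lambda,0))$ with $\lambda\in\Z$ occur; as written, your description of $\Gamma^{J}_{\infty}$ contradicts your own choice of cosets. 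This also shows that the $c=0$ piece needs no Lipschitz-type expansion: each $\lambda$ contributes a single exponential, and matching exponents gives $\delta_{N}(D,r,D',r')$ directly. Second, Jacobi forms of index $N$ live on the full Jacobi group $\SL_{2}(\Z)\ltimes\Z^{2}$, so the lower-left entry $c$ runs over \emph{all} positive integers and the Kloosterman parameter is $n=c$ (consistent with $\rho$ and $\lambda$ running mod $n$ in $H_{N,n}$); your substitution $n=c/N$ would wrongly restrict $c$ to multiples of $N$ and misplace the factor $N$ in the Bessel argument $\tfrac{\pi}{Nn}\sqrt{D'D}$, where the $N$ in fact comes from the index (the $4N$ in the exponents of the seed and in the Gaussian $z$-integral), not from any level condition on the matrices.
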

		
	\begin{Proposition}
		The $m$-th Fourier coefficient of $\S_{D,r}\left(P^{J}_{k+1,N,(D,r)}\right)$ is given by
		\begin{align*} 
		&(1 \pm \delta_{N,1})\left( \frac{D}{m}\right)m^{k-1} + i^{k+1}\pi\sqrt{2}m^{k-1/2}\sum_{\substack{n\geq 1 \\ N\mid n}}n^{-1/2}S^{\pm}_{N,n}(m,D)J_{k-1/2}\left( \frac{\pi}{n}m|D|\right)
		\end{align*}
		where $\pm 1 = (-1)^{k+1}$, $\delta_{N,1}$ is the Kronecker delta, $J_{k-1/2}$ is the Bessel function of order $k-1/2$,
		\[
		S_{N,n}(m,D) = \sum_{\substack{b(2n) \\  b^{2}\equiv D^{2}(4n) \\ b \equiv D(2N)}}\chi_{D}\left(\begin{pmatrix} n & b/2 \\ b/2 & \frac{b^{2}-D^{2}}{4n}\end{pmatrix} \right)e_{2n}(bm)
		\]
		with the genus character $\chi_{D}$ defined in \cite{GKZ}, $S^{\pm}_{N,n}(m,D) = S_{N,n}(m,D) \pm S_{N,n}(-m,D)$ and $e_{2n}(z) = e^{2\pi i z/2n}$ for $z \in \C$.
	\end{Proposition}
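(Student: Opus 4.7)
The plan is to substitute the explicit Fourier expansion of $P^{J}_{k+1,N,(D,r)}$ from the preceding proposition into the definition (\ref{ShimuraLift}) of the lifting map $\mathcal{S}_{D,r}$. This immediately gives
\[
\big[\mathcal{S}_{D,r}\big(P^{J}_{k+1,N,(D,r)}\big)\big]_{m} \;=\; \sum_{d\mid m}\Big(\tfrac{D}{d}\Big)d^{k-1}\, g^{\pm}_{k+1,N,(D,r)}\!\Big(\tfrac{m^{2}}{d^{2}}D,\,\tfrac{m}{d}r\Big)
\]
for the $m$-th Fourier coefficient, and I would then analyse the contributions of the two pieces $\delta_{N}$ and $H_{N,n}$ of $g^{\pm}$ separately.

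For the diagonal piece, the equation $(m/d)^{2}D=D$ with $D<0$ forces $d=m$, yielding the term $(D/m)m^{k-1}$. The $\pm$-symmetrisation contributes a second summand $\delta_{N}(D,r,D,-r)$, which is nonzero iff $r\equiv -r\pmod{2N}$, i.e.\ $N\mid r$. Since $r^{2}\equiv D\pmod{4N}$ together with $(D,N)=1$ forces $r$ to be coprime to $N$, this happens only for $N=1$. Hence the diagonal contribution equals $(1\pm\delta_{N,1})(D/m)m^{k-1}$, matching the first summand in the claimed formula.

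For the Kloosterman piece, writing $D'=(m/d)^{2}D$ and $r'=(m/d)r$ and using the simplifications $(D'/D)^{k/2-1/4}=(m/d)^{k-1/2}$ and $\sqrt{D'D}=(m/d)|D|$, the Bessel argument becomes $\pi(m/d)|D|/(Nn)$. Setting $e:=m/d$ and changing summation variable to $n':=Nn/e$, the Bessel function becomes $J_{k-1/2}(\pi m|D|/n')$ with $n'$ running over positive multiples of $N$. Combining the powers of $d,e,n$ and $N$ coming from $(m/d)^{k-1/2}$, $d^{k-1}$, the $n^{-3/2}$ inside $H_{N,n}$, and the $N^{-1/2}$ prefactor of $g$ collapses into the overall factor $m^{k-1/2}(n')^{-1/2}$ in front of the constant $i^{k+1}\pi\sqrt{2}$, as desired.

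The main remaining step, and the one I expect to be the crux, is identifying the resulting arithmetic factor with $S^{\pm}_{N,n'}(m,D)$. Concretely, one needs an identity of the form
\[
\sum_{de=m}\Big(\tfrac{D}{d}\Big)d^{k-1}e^{k-1/2}\,H_{N,en'/N}\!\big(D,r,e^{2}D,er\big)\;=\;m^{k-1/2}(n')^{-1/2}\,S_{N,n'}(m,D)
\]
(interpreting the left-hand side as zero unless $N\mid en'$), together with its $\pm$-symmetrised counterpart. Unfolding the definitions of $H_{N,n}$ and of the genus character $\chi_{D}$ from \cite{GKZ}, I would parametrise the inner $(\rho,\lambda)$-Kloosterman sum by a single residue $b\pmod{2n'}$ with $b^{2}\equiv D^{2}\pmod{4n'}$ and $b\equiv D\pmod{2N}$ via the classical correspondence between invertible residue classes and $\SL_{2}(\Z)$-classes of integral binary quadratic forms, and absorb the twist $(D/d)$ into the evaluation of $\chi_{D}$ on the form $\left(\begin{smallmatrix}n'&b/2\\b/2&(b^{2}-D^{2})/(4n')\end{smallmatrix}\right)$. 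The two exponential sums should then match term by term, and the $\pm$-version follows by replacing $m$ with $-m$, completing the computation.
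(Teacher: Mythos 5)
Your opening steps match the paper: you substitute the Fourier expansion of $P^{J}_{k+1,N,(D,r)}$ into the lifting map, and your analysis of the diagonal term (the $d=m$ contribution and the observation that $\delta_{N}(D,r,D,-r)\neq 0$ forces $N\mid r$, hence $N=1$ because $(D,N)=1$) is exactly the paper's argument. The problems are in the Kloosterman piece. There is first a change-of-variables slip: to turn the Bessel argument $\pi(m/d)|D|/(Nn)$ into $\pi m|D|/n'$ you must set $n'=Nnd$ (equivalently $n'=Nnm/e$), not $n'=Nn/e$; with your substitution the argument becomes $\pi|D|/n'$, and accordingly the ``crux identity'' you display is not the one that is needed: the index of $H$ should be $n'/(Nd)$, not $en'/N=mn'/(Nd)$, and the normalisation is also off by $\sqrt{N}$ (already for $m=1$ your identity reads $H_{N,n'/N}(D,r,D,r)=(n')^{-1/2}S_{N,n'}(1,D)$, whereas the correct relation is $S_{N,n'}(1,D)=(n'/N)^{1/2}H_{N,n'/N}(D,r,D,r)$).

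More importantly, the correctly stated identity,
\[
S_{N,Nn}(m,D)\;=\;\sum_{d\mid(m,n)}\left(\frac{D}{d}\right)(n/d)^{1/2}\,H_{N,n/d}\!\left(D,r,\frac{m^{2}}{d^{2}}D,\frac{m}{d}r\right),
\]
is precisely Lemma II.3 of \cite{GKZ}, which the paper's proof simply invokes at this point. Your plan to prove it by parametrising ``the inner $(\rho,\lambda)$-sum by a single residue $b \pmod{2n'}$'' and ``absorbing the twist $(D/d)$ into $\chi_{D}$'' is only a sketch and does not address the real difficulty: the left-hand side is a divisor sum of Kloosterman-type sums to the \emph{varying} moduli $n/d$, with second argument scaled by $m/d$, and recombining these across all $d\mid(m,n)$ into a single genus-character sum modulo $2n$ is the nontrivial content of that lemma. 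The quadratic-form parametrisation you outline is essentially the argument the paper uses for a different and simpler statement, namely the subsequent lemma identifying $S_{N,n}(m,D)$ with the exponential sum $K_{N,n}(m,D)$ arising from the kernel function, where only a single sum over $\ell$ mod $D$ and forms with fixed upper-left entry $n$ occurs and no divisor sum has to be untangled. So either correct the reindexing and cite \cite{GKZ}, Lemma II.3, as the paper does, or supply a genuine proof of that lemma; as written the argument has a gap at its central step.
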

	
	\begin{proof}
		The $m$-th coefficient of $\S_{D,r}\left(P^{J}_{k+1,N,(D,r)}\right)$ is
		\[
		\sum_{d\mid m}\left( \frac{D}{d}\right)d^{k-1}g^{\pm}_{k+1,N,(D,r)}\left(\frac{m^{2}}{d^{2}}D,\frac{m}{d}r \right)
		\]
		where $g^{\pm}_{k+1,N,(D,r)}(D',r')$ denotes the $(D',r')$-th coefficient of $P^{J}_{k+1,N,(D,r)}$.

		The $\delta_{N}$-part only gives a contribution for $d = m$ in which case $\delta_{N}(D,r,D,r) = 1$. Further, $\delta_{N}(D,r,D,-r)$ is $1$ if and only if $N \mid r$ which is equivalent to $N \mid D$ since $D$ is a fundamental discriminant and $r^{2} \equiv D(4N)$. Since $(D,N) = 1$ by assumption, this is only possible for $N = 1$. 

		The Kloosterman sum part of $g_{k+1,N,(D,r)}$ gives the contribution
		\begin{align*}
		i^{k+1}\pi \sqrt{2}N^{-1/2}m^{k-1/2}\sum_{n \geq 1}\sum_{d\mid (m,n)}\left(\frac{D}{d}\right)d^{-1/2}H_{N,n/d}\left(D,r,\frac{m^{2}}{d^{2}}D,\frac{m}{d}r\right)J_{k-1/2}\left( \frac{\pi}{Nn}m|D| \right).
		\end{align*}
		The proof will be finished by the following lemma from \cite{GKZ}.
	\end{proof}
	
	\begin{Lemma}[\cite{GKZ}, Lemma II.3]
		For all $m \geq 1, n\geq 0, r \in \Z$ with $D = r^{2}-4Nn < 0$ we have
		\[
		S_{N,Nn}(m,D) = \sum_{d\mid (m,n)}\left(\frac{D}{d}\right)(n/d)^{1/2}H_{N,n/d}\left(D,r,\frac{m^{2}}{d^{2}}D,\frac{m}{d}r\right).
		\]
	\end{Lemma}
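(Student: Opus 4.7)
My plan is to follow the proof of Lemma II.3 in \cite{GKZ}, which establishes the identity by direct computation. The strategy has two main steps: simplify the right-hand side by carrying out the $\lambda$-summation in $H_{N,n/d}$ via a quadratic Gauss sum, and then set up a bijection between the resulting parameterization and the $b$-parameterization of $S_{N,Nn}(m,D)$.

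For the first step I would expand $H_{N,n/d}$ according to its definition, so the right-hand side becomes a triple sum over $d \mid (m,n)$, $\rho \in (\Z/(n/d)\Z)^{*}$, and $\lambda \in \Z/(n/d)\Z$. Writing $n' = n/d$ and $(D',r') = ((m/d)^{2}D,(m/d)r)$, the exponent in $H_{N,n'}$ is a polynomial of degree two in $\lambda$ whose discriminant is proportional to $D$; completing the square in $\lambda$ modulo $n'$ turns the inner $\lambda$-sum into a classical quadratic Gauss sum that evaluates in closed form, after which only a linear exponential in $\bar\rho$ and a prefactor depending on $d$ remain.

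For the second step I would reparameterize. As $d$ runs over common divisors of $m$ and $n$ and $\rho$ runs over units modulo $n/d$, the combined data translates bijectively into a residue $b \pmod{2Nn}$ satisfying $b \equiv D \pmod{2N}$ and $b^{2} \equiv D^{2} \pmod{4Nn}$, with $d$ recording the appropriate common factor extracted along the way. Under this bijection the surviving exponential becomes $e_{2Nn}(bm)$, matching the $b$-sum defining $S_{N,Nn}(m,D)$, and the prefactor $(D/d)$ combines with the Gauss-sum contribution to reproduce the value of the genus character $\chi_{D}$ on the form $\bigl[Nn,\, b/2,\, (b^{2}-D^{2})/(4Nn)\bigr]$.

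The main obstacle is precisely this last identification: that the Gauss-sum factors, the Legendre symbol $(D/d)$, and the quadratic form just mentioned assemble correctly into $\chi_{D}$ as defined in \cite{GKZ}. This relies on the characterization $\chi_{D}([a,\beta,c]) = (D/a')$ for any integer $a'$ coprime to $D$ represented by the form, together with the multiplicative behavior of $\chi_{D}$ under extracting a common factor from $a$. Once this compatibility is established, the remaining book-keeping, notably verifying that the factor $(n/d)^{1/2}$ on the right correctly absorbs the $n'^{-3/2}$ in the definition of $H_{N,n'}$ and the Gauss-sum normalization, is purely mechanical. Since the identity is exactly Lemma II.3 of \cite{GKZ}, in the paper itself I would simply cite their result.
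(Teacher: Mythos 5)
The paper does not prove this lemma at all: it is quoted verbatim from \cite{GKZ} (Lemma II.3), so your decision to simply cite their result is exactly what the paper does. Your sketch of the underlying computation (evaluating the $\lambda$-sum as a quadratic Gauss sum and matching the resulting parameterization with the $b$-sum and the genus character $\chi_{D}$) is a reasonable outline of the argument in \cite{GKZ}, but no such details are needed or given here.
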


	Comparing the Fourier coefficients of $R_{2k,N}\big(\tau,k,\big(\frac{D}{\cdot}\big)\big)$ and $\S_{D,r}(P^{J}_{k+1,N,(D,r)})$ we have computed above we see that we have to show
	\[
	S_{N,n}(m,D) = K_{N,n}(m,D)
	\]
	for all $n \geq 1$ with $N \mid n, (D,N) = 1$, and all $m \in \Z$. This immediatly follows from the next lemma:
	
	\begin{Lemma}
		Let $N \mid n$ and $(D,N) = 1$. A set of representatives for $b(2n)$ with $b^{2} \equiv D^{2} (4n)$ and $b \equiv D (2N)$ is given by
		\[
		b = |D| - 2(|D|a + \ell c)\bar{a}
		\]
		where $a,c$ run through $\Z$ with $(a,c) = 1$, $c > 0$ and $N \mid c$, and $\ell$ runs mod $D$ such that $(|D|a + \ell c)c = n$. Here $\bar{a}$ is any fixed integer with $a \bar{a} \equiv 1 (c)$. If $b$ is of this form, we have
		\[
		\chi_{D}\left( \begin{pmatrix}n & b/2 \\ b/2 & \frac{b^{2}-D^{2}}{4n}\end{pmatrix}\right) = \left( \frac{D}{\ell}\right).
		\]
 	\end{Lemma}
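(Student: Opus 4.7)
The plan is to establish the lemma in three stages: show that every triple $(a,c,\ell)$ yields a class $b \pmod{2n}$ satisfying the two congruences, construct the inverse map to obtain a bijection, and evaluate the genus character. For the first stage, set $n' := |D|a + \ell c$ so that $n'c = n$, and use the factorisations
\[
b - |D| = -2 n' \bar a, \qquad b + |D| = 2(|D| - n' \bar a).
\]
From $a \bar a \equiv 1 \pmod c$ one obtains $n'\bar a \equiv |D| \pmod c$, hence $b + |D| \equiv 0 \pmod{2c}$; combined with $N \mid c$ this gives $b \equiv D \pmod{2N}$. Multiplying the two displays yields $b^2 - D^2 = -4 n' \bar a (|D| - n'\bar a)$, divisible by $4n'c = 4n$ since $c \mid (|D| - n'\bar a)$. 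The class of $b \pmod{2n}$ is independent of the representative $\bar a$ of $a^{-1} \pmod c$, since changing $\bar a$ by $c$ shifts $b$ by $-2n$.

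For the inverse map, given $b$ with the two congruences, $(|D|-b)/2$ is an integer. The forward formula implies $n' = \gcd((|D|-b)/2,\,n)$, because $\bar a$ is coprime to $c$. I would then take $n'$ to be this gcd, set $c := n/n'$, define $\bar a \equiv (|D|-b)/(2n') \pmod c$, let $a$ be its inverse mod $c$, and define $\ell \pmod{|D|}$ by $\ell c = n' - |D|a$. The congruence $b \equiv D \pmod{2N}$ together with $N \mid n$ forces $N \mid c$, while $b^2 \equiv D^2 \pmod{4n}$ forces $2c \mid b + |D|$, which makes $\ell c = n' - |D|a$ a bona fide integer. A routine check confirms that the two constructions are mutually inverse.

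For the genus character I would reduce the form $Q(X,Y) = nX^2 + bXY + \tfrac{b^2-D^2}{4n}Y^2$ modulo $|D|$. Writing $ct = a\bar a - 1$, the identities $n = |D|ac + \ell c^2 \equiv \ell c^2$, $b = |D| - 2n'\bar a \equiv -2\ell c \bar a$, and a short computation giving $(b^2-D^2)/(4n) = |D|t\bar a + \ell \bar a^2 \equiv \ell \bar a^2 \pmod{|D|}$ collapse the form to
\[
Q(X, Y) \equiv \ell \bigl(cX - \bar a Y\bigr)^2 \pmod{|D|}.
\]
Thus any value primitively represented by $Q$ and coprime to $D$ is a nonzero square times $\ell$ modulo $|D|$, and the defining property of the genus character from \cite{GKZ} gives $\chi_D(Q) = (D/\ell)$. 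The existence of such a representation is a brief CRT argument (and if $(\ell, D) > 1$ both sides vanish).

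The main obstacle is the third stage: producing the clean square factorisation modulo $|D|$ requires invoking $a\bar a \equiv 1 \pmod c$ and the precise shape of $(b^2-D^2)/(4n)$ at the right moment, and applying the GKZ characterisation of $\chi_D$ needs the existence of a primitive representation of a $D$-coprime integer. Stages one and two, by contrast, reduce to bookkeeping once the factorisations of $b \pm |D|$ are in hand.
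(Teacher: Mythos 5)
Your argument is correct, and it takes a genuinely different route from the paper. You prove the bijection by hand: the identities $b-|D|=-2n'\bar a$, $b+|D|=2(|D|-n'\bar a)$ give the two congruences, and the inverse map via $n'=\gcd\big((|D|-b)/2,\,n\big)$ works exactly as you say (note that the step ``$b\equiv D\ (2N)$ forces $N\mid c$'' also uses the standing hypothesis $(D,N)=1$, and that throughout you use $D=-|D|$, i.e.\ $D<0$, which holds in the paper's setting); your reduction $Q(X,Y)\equiv \ell\,(cX-\bar aY)^2 \pmod{|D|}$ is verified by the identity $(b^2-D^2)/(4n)=|D|t\bar a+\ell\bar a^2$ with $ct=a\bar a-1$, and then a CRT-chosen value coprime to $D$ (possible since $(\bar a,c)=1$) pins down $\chi_D(Q)=\big(\tfrac{D}{\ell}\big)$, with both sides vanishing when $(\ell,D)>1$. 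The paper instead argues orbit-theoretically: it views the admissible residues $b\ (2n)$ as the middle entries of $\Gamma_0(N)$-inequivalent forms in $\mathcal{Q}_{N,D^2,D}$ with leading coefficient $n$, uses that $\left(\begin{smallmatrix}0 & D/2\\ D/2 & \ell\end{smallmatrix}\right)$, $\ell$ mod $D$, is a system of orbit representatives (this is where $(D,N)=1$ enters there), and obtains your formula for $b$ by transforming these representatives; the genus character value is then immediate because the transformed form properly represents $\ell$, so $\chi_D=\big(\tfrac{D}{\ell}\big)$ straight from the definition. Your version buys a self-contained, checkable computation that avoids both the orbit classification of $\mathcal{Q}_{N,D^2,D}$ and the slightly delicate identification of residues mod $2n$ with inequivalent forms; the paper's version buys brevity and a conceptual explanation of why the parametrisation by $\ell$ mod $D$ appears, with the character evaluation essentially for free.
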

	
	\begin{proof}
	Recall that for every class $\rho$ mod $2N$ the group $\Gamma_{0}(N)$ acts on the set of quadratic forms
	\[
	\mathcal{Q}_{N,D,\rho} = \left\{ Q = \begin{pmatrix}a & b/2 \\ b/2 & c \end{pmatrix}: N \mid a, \ b^{2}-4ac = D^{2}, \ b \equiv \rho (2N) \right\} 
	\] 
	by $Q\circ M =  M^{t}QM$. The elements $b (2n)$ with $b^{2} \equiv D^{2}(4n)$ and $b \equiv D(2N)$ can be thought of as the upper right entries of a full system of $\Gamma_{0}(N)$-inequivalent quadratic forms in $\mathcal{Q}_{N,D^{2},D}$ with fixed upper left entry $n$. A system of $\Gamma_{0}(N)$-representatives for the whole set $\mathcal{Q}_{N,D^{2},D}$ is given by $\left(\begin{smallmatrix}0 & D/2 \\ D/2 & \ell \end{smallmatrix}\right)$ where $\ell$ runs mod $D$ (here the condition $(D,N) = 1$ is used). Hence we need to find all $\left(\begin{smallmatrix}\alpha & \beta \\ \gamma & \delta \end{smallmatrix}\right) \in \Gamma_{0}(N)$ such that the matrices
	\begin{align}\label{Matrix}
	\begin{pmatrix}\alpha & \gamma \\ \beta & \delta \end{pmatrix}
	\begin{pmatrix}0 & D/2 \\ D/2 & \ell \end{pmatrix}
	\begin{pmatrix}\alpha & \beta \\ \gamma & \delta \end{pmatrix} = \begin{pmatrix}(\alpha D + \gamma \ell)\gamma & \frac{-D+2(\alpha D + \gamma \ell)\delta}{2} \\ \frac{-D+2(\alpha D + \gamma \ell)\delta}{2} & (\beta D + \delta \ell)\delta \end{pmatrix}
	\end{align}
	have upper left entry $n$ and are inequivalent mod $\Gamma_{0}(N)$. Setting $a = -\alpha, c = \gamma, \bar{a}=-\delta$ and changing $a$ and $c$ to $-a$ and $-c$ if $c < 0$ it is now easy to see that we can take $b = |D|-2(|D|a + \ell c)\bar{a}$ with $a,c,\ell$ as stated in the lemma. 
	
	The genus character $\chi_{D}$ of the matrix (\ref{Matrix}) and the Kronecker symbol $\left( \frac{D}{\ell}\right)$ both are $0$ for $(D,\ell) > 1$. For $(D,\ell) = 1$ the quadratic form (\ref{Matrix}) properly represents $\ell$, so $\chi_{D}$ applied to the matrix (\ref{Matrix}) equals $\left(\frac{D}{\ell} \right)$ by definition of the genus character.
	\end{proof}
	
	\begin{proof}[Proof of Theorem \ref{AveragedWaldspurger}]
%
		As in (\ref{KernelEquation2}) we write
		\begin{align*}
	R_{2k,N}\big(\tau,s,\big(\tfrac{D}{\cdot}\big)\big) =\frac{\Gamma(2k-1)}{(4\pi)^{2k-1}}\sum_{f \in S_{2k}(N)}\frac{\overline{L(f,D,2k-\overline{s})}}{\langle f,f\rangle}f(\tau),
	\end{align*}
		with $f$ running through an orthogonal basis of $S_{2k}(N)$. On the other hand, the Petersson coefficient formula (\ref{JacobiPoincareSeries}) gives
		\begin{align*}
		\S_{D,r}\left(P^{J}_{k+1,N,(D,r)}\right) &=  \frac{N^{k-1}\Gamma(k-1/2)}{2\pi^{k-1/2}|D|^{k-1/2}}\sum_{\phi \in J^{\cusp}_{k+1,N}}\frac{\overline{c_{\phi}(D,r)}}{\langle \phi,\phi \rangle}\mathcal{S}_{D,r}(\phi)(\tau)
		\end{align*}
		where $\phi$ runs through an orthogonal basis of $J^{\cusp}_{k+1,N}$ and $c_{\phi}(D,r)$ is the $(D,r)$-th coefficient of $\phi$. As we have shown above, both expressions are equal at $s = k$. By \cite{EichlerZagier}, Theorem 4.5, the $m$-th Fourier coefficient of $\S_{D,r}(\phi)$ is just the $(D,r)$-th coefficient of $\phi|T_{m}$, so taking the $m$-th coefficient (and complex conjugation) yields
		\[
		\frac{\Gamma(2k-1)}{(4\pi)^{2k-1}}\sum_{f \in S_{2k}(N)}\frac{L(f,D,s)}{\langle f,f\rangle}\overline{a_{f}(m)} =  \frac{N^{k-1}\Gamma(k-1/2)}{2\pi^{k-1/2}|D|^{k-1/2}}\sum_{\phi \in J^{\cusp}_{k+1,N}}\frac{c_{\phi}(D,r)}{\langle \phi,\phi \rangle}\overline{c_{\phi|T_{m}}(D,r)}
		\]
		Now the Legendre duplication formula $\Gamma(k-1/2)\Gamma(k) = 2^{2-2k}\sqrt{\pi}\Gamma(2k-1)$ completes the proof of Theorem \ref{AveragedWaldspurger}.
		\end{proof}

\bibliography{references}{}
\bibliographystyle{alphadin}

\end{document}